\tikzstyle{dot}=[fill=black, circle, inner sep=0pt, minimum size=3pt]
\tikzstyle{state}=[fill=white, draw, solid, circle, inner sep=0pt, minimum size=12pt]
\DeclareRobustCommand{\SkipTocEntry}[5]{}
\newtheorem{theorem}{\textbf{Theorem}}[section]
\newtheorem{proposition}[theorem]{\textbf{Proposition}}
\newtheorem{definition}[theorem]{Definition}
\newtheorem{remark}[theorem]{Remark}
\newtheoremstyle{plainnonitalic} 
{}                
{}                
{\normalfont}     
{}                
{\bfseries}       
{.}               
{5pt plus 1pt minus 1pt} 
{}                
\theoremstyle{plainnonitalic}
\DeclareMathOperator{\GL}{GL}
\DeclareMathOperator{\id}{1}
\newcommand{\abs}[1]{\lvert#1\rvert}
\newcounter{Todo}
\begin{document}

\title{Quantum Superalgebras and the Free-Fermionic Yang-Baxter equation}

\author{Ben Brubaker}
\address{School of Mathematics, University of Minnesota, Minneapolis, MN 55455}
\email{brubaker@math.umn.edu}
\author{Daniel Bump}
\address{Department of Mathematics, Stanford University, Stanford, CA 94305-2125}
\email{bump@math.stanford.edu}
\author{Henrik P. A. Gustafsson}
\address{Department of Mathematics and Mathematical Statistics, Umeå University, SE-901 87 Umeå, Sweden}
\email{henrik.gustafsson@umu.se}

\begin{abstract}
The free-fermion point refers to a $\operatorname{GL}(2)\times\operatorname{GL}(1)$ parametrized
Yang-Baxter equation within the six-vertex model. It has been known for a long time that this
is connected with the quantum group $U_q(\mathfrak{gl}(1|1))$. 
We demonstrate that $R$-matrices from the finite quantum superalgebra $U_q(\mathfrak{gl}(1|1))$ 
produce a dense subset of the free-fermionic Yang-Baxter equations of the six-vertex model, matching those of the prime, simple modules
in the affine quantum superalgebra $U_q(\widehat{\mathfrak{gl}}(1|1))$. Either of these quantum
groups can be used to generate the full free-fermion point, and we discuss them both.
Our discussion includes 6 families of six-vertex models used by Brubaker, Bump,
and Friedberg in connection with Tokuyama's theorem, a deformation of the
Weyl character formula. Thus our work gives quantum group interpretations for those models, 
known informally as Tokuyama ice.
\end{abstract}

\subjclass[2020]{Primary: 82B23; Secondary: 16T25, 05E10, 17B37, 17B10}

\maketitle

\section{Introduction}

The {\it free-fermion point} of the six-vertex model is a subset of possible Boltzmann weight choices
for which the model behaves like non-interacting fermions. Algebraically, the six Boltzmann weights
must satisfy a simple homogeneous quadratic equation \cite{Felderhof1, FanWuPhase, Baxter}. This condition guarantees that free-fermion
Boltzmann weights may be encoded in an $R$-matrix satisfying the Yang-Baxter equation. As shown on p. 126 of
\cite{KBI93} and in~\cite{BrubakerBumpFriedbergHkice}, the set of all free-fermionic weights give
an unusual parametrized Yang-Baxter equation with parameter group
$\operatorname{GL} (2) \times \operatorname{GL} (1)$. These Yang-Baxter equations ensure that the
partition functions of the associated lattice models are exactly solvable, with interesting connections to
special functions in algebra, combinatorics, and probability.

Another important source of $R$-matrices for the Yang-Baxter equation is pairs of modules of quantum groups, particularly
the quantized enveloping algebras $U_q(\mathfrak{g})$ for Lie algebras $\mathfrak{g}$. Indeed a primary motivation for
the genesis of quantum groups was to provide examples of Yang-Baxter equations \cite{Drinfeld}. It has long been understood (e.g., \cite{BazhanovShadrikov, Perk-Au-Yang, ABPW}) that
taking two irreducible modules for the superalgebra $U_q(\mathfrak{sl}(1|1))$ results in a six-vertex
model $R$-matrix at the free-fermion point. In contrast with $U_q(\mathfrak{sl}_2)$, the standard module is just one of
a continuous family of irreducible two-dimensional representations of $U_q(\mathfrak{sl}(1|1))$.
Now given any such $R$-matrix there are methods for developing
multi-parameter generalizations of Yang-Baxter equations, which have collectively become known as ``baxterization'' 
\cite{LiguoriMintchev, BoukraaMaillard, MaityPadmanabhanKorepin} after early work in this
direction by Vaughan Jones \cite{Jones} in the context of knot theory.
Thus it becomes natural to ask whether baxterization methods
exist to exhaust all free-fermionic solutions and whether these methods admit some natural interpretation in the context
of the affine quantum superalgebra $U_q(\widehat{\mathfrak{gl}}(1|1))$.

Bracken, Gould, Zhang, and Delius \cite{BrackenGouldZhangDelius} complete such a 
baxterization for several low rank groups including $U_q(\mathfrak{gl}(1|1))$, 
in which the highest weights of the respective modules feature in the additional parameters.
There is an additional spectral parameter via methods from Jimbo \cite{JimboQuantumGroups}.
We adopt a similar strategy in Section~\ref{sec:rmatrixderived}, beginning from two highest 
weight representations $V(\zeta)$ and $V(\zeta')$, and providing a rigorous derivation of two initial 
$R$-matrices which span $\textrm{End}(V(\zeta) \otimes V(\zeta'))$. 
We then show (Theorem~\ref{thm:YBE2}) parametrized Yang-Baxter equations for linear
combinations of the two initial $R$-matrices. The linear combinations of two $R$-matrices with two highest weights
give us four total parameters, short of the five parameter family at the free-fermion point and we
also identify the additional constraint equation to pin down our subset in Remark~\ref{rem:cubic}.
If we further allow for changes of basis in our modules, we gain one additional parameter and thus
achieve a dense subset of the free-fermion point. We hope to use a similar version of baxterization for
higher rank superalgebra modules in future work. 

Several things stand out. Most importantly, $U_q(\mathfrak{gl}(1|1))$, or
equivalently $U_q(\mathfrak{sl}(1|1))$, has a continuous family of irreducible two dimensional modules. 
In the classical case $\mathfrak{gl}(1|1)$, these are the Kac modules. 
This large family of two dimensional representations is in contrast to
$U_q(\mathfrak{sl}_2)$, and makes a decisive difference between the two
theories.
A further distinction is that the superalgebra has a central element in its Cartan subalgebra which we will denote by $W$, and the $W$-eigenvalues appear in the formulas.
On the other hand, the deformation parameter $q$ does not play an important role: all values
of $q$ give rise to the free-fermionic point, and $q$ disappears from
the formulas.

Another source of parametrized Yang-Baxter equations at the free-fermion point are modules
for quantum affine superalgebras. Indeed the spectral parameter in the $R$-matrix naturally 
arises from the central extension. The familiar complication here is that the computations for $R$-matrices
are rather intricate and depend heavily on the chosen presentation for the affine group. 
The Fadeev-Reshetikhin-Takhtajan style presentation \cite{FRT1, FRT2} immediately yields solutions
to Yang-Baxter equations by construction, but the representation theory may be more naturally described
in terms of the alternate presentations of Drinfeld and of Drinfeld-Jimbo. Moving amongst these different
presentations can be a challenge. Fortunately, we have explicit computations of $R$-matrices
for a large class of irreducible representations of $U_q(\widehat{\mathfrak{gl}}(1|1))$ due to 
Zhang \cite{HuafengZhangGL11}. Zhang shows in~\cite{HuafengZhangRTT} that this family is 
essentially all prime, simple modules for the quantum superalgebra. Here ``prime'' means it does not
arise as a non-trivial tensor product of modules. We review this in more detail in Section~\ref{sec:affine} and further
demonstrate that our earlier baxterization allows us to recover this class of modules. In this way,
we show that our more primitive methods for finite Lie algebras recover the affine $R$-matrices in question.

We analyze an important special class of free-fermionic
models that arose in \cite{BrubakerBumpFriedbergHkice}, which we refer to as
\textit{Tokuyama models}. Tokuyama~\cite{Tokuyama} described a deformation
of the Weyl character formula for $\operatorname{GL}(n)$, and his formula
can be interpreted as evaluating the partition function of a model of
this type. Actually there are two versions of Tokuyama's theorem,
meaning there are two different ways of expressing his formula as a sum over
Gelfand-Tsetlin patterns. Correspondingly there are two versions
of Tokuyama ice, that we call Gamma and Delta. They are
not interchangeable: for example it is possible to construct systems with Gamma and Delta
ice whose partition functions are both equal to Tokuyama's formula, but there is not a simple bijection of
states that implies this.
The equality of partition functions can, however, be proved using the Yang-Baxter equation.
In addition to Gamma Ice and Delta Ice there are four other flavors of free-fermionic vertices 
needed to complete the theory.

Gamma and Delta ice appear in Ivanov~\cite{Ivanov} to
construct U-turn models involving both sets of weights simultaneously, whose partition functions
are characters of the symplectic group. They appear in~\cite{BrubakerSchultzHamiltonians,BBBGVertex}
in connection with models whose row transfer matrices are vertex operators acting on
the fermionic Fock space. As explained in Chapters 1, 6 and~19 of~\cite{wmd5book} and
the Appendix in \cite{BBB}, generalizations of these Yang-Baxter equations for Tokuyama ice imply
the analytic continuation and functional equations of certain Type~A ``Weyl group
multiple Dirichlet series.''

In all of the above work, the permutability of rows of Gamma and Delta ice,
encoded in the four auxiliary families, plays a special role.

In the crystal limit $q=0$ Tokuyama models or their colored variants
specialize to five-vertex models whose partition
functions are Demazure atoms~\cite{BBBGdemice}; after a transformation these
limiting models include stochastic models also related to random walks
with osculating and vicious walkers~\cite{Fisher, ForresterDW, BrakOwczarek, KorffQC}. 

We review Gamma and Delta ice in Section~\ref{sec:tokuyama} and show how each of the aforementioned sets of Boltzmann weights for
Tokuyama ice are realized by baxterization (Theorem~\ref{thm:alltheice}).
From either the viewpoint of $U_q({\mathfrak{gl}}(1|1))$ or its affinization, Gamma and
Delta ice are limiting cases, while the other four families are not.

In Section~\ref{sec:affine}, we demonstrate how limits in the spectral parameter of the affine module $R$-matrices
preserve solutions to the Yang-Baxter equation and recover the weights for both Gamma and Delta models.
Algebraically, these limits of $R$-matrices should arise from limits of modules, known as ``asymptotic'' modules
in the literature and studied by Hernandez-Jimbo \cite{JimboHernandez} in the non-super setting and by 
Huafeng Zhang \cite{ZhangSuperAsymptotic} in the superalgebra case. Making the connection rigorous between limits of
$R$-matrices and asymptotic modules is an interesting avenue for future work.

Taken together, we thus provide two quantum group explanations for the existence of Gamma and Delta ice -- one
via baxterization of $R$-matrices for quantum groups of the finite Lie algebra and another as limits of affine quantum
group $R$-matrices. These explanations give resolutions of the long-standing mystery of the algebraic origins of Tokuyama ice, set in motion in \cite{BrubakerBumpFriedbergHkice}.
Ice-type models vastly generalizing Tokuyama ice and featuring colored paths moving in multiple directions were given in \cite{BBBGMetahori}. 
For these models, the transfer matrices satisfied Yang-Baxter equations coming from standard modules for quantum groups associated to
$\mathfrak{gl}(m|n)$ for arbitrary $m,n$. We hope to uncover the mysterious origins of the Boltzmann weights used in the row
transfer matrices by similar baxterization procedure in higher rank. Indeed this was our primary motivation for exploring the cases
in the present paper.

The literature on the free-fermion point of the six-vertex model and its generalizations is so vast that we have necessarily had to
limit the references. In addition to those already mentioned, we describe other papers with connection to the present topics from
which we have derived particular insight. For a recent surveys of integrable six-vertex models, see \cite{ReshetikhinLectures, ZJHab}. 
The recent work of~\cite{MaityPadmanabhanKorepin} explores baxterization of the non-Hermitian cases of Hietarinta's \cite{Hietarinta} constant 
solutions to the Yang-Baxter
equation. The paper \cite{BumpMcNamaraNakasuji} generalizes
\cite{BrubakerBumpFriedbergHkice} to factorial Schur functions, while \cite{ABPW} explores rational
functions generalizing factorial Schur functions with applications, for
example, to domino tilings.
\addtocontents{toc}{\SkipTocEntry}
\subsection*{Acknowledgements}
This paper would not have been possible without the many discussions and ideas
provided by Valentin Buciumas, particularly with regard to the limiting
processes used to obtain Gamma and Delta ice from affine $R$-matrices. We thank
Amol Aggarwal for first making us aware of the asymptotic modules and their
potential connection to the lattice models discussed here. We also thank
Hiroyuki Yamane for sharing some of his papers with us and Sasha Tsymbaliuk for
helpful conversations.
We thank Vladimir Korepin, Jacques Perk, Carl Westerlund and Huafeng Zhang for helpful feedback on an earlier version of this paper.
This work was partially supported by NSF grant DMS-2401470 (Brubaker).

\section{Lie Superalgebras}

We will quickly review some relevant theory referring to
{\cite{KacSuperalgebras,MussonSuperalgebras,ChengWangDualities}} for further
information.

A \textit{super vector space} $V$ is a $\mathbb{Z}_2$-graded vector space.
Elements of the $0$-graded part $V_0$ are called \textit{even} and elements
of $V_1$ are called odd. If $x \in V$ is homogeneous, we will denote by $|x|
\in \{0, 1\}$ the graded degree of $x \in V$. Let $\mathbb{C}^{m|n}$ be
the super vector space with $(\mathbb{C}^{m|n})_0 =\mathbb{C}^m$ and
$(\mathbb{C}^{m|n})_1 =\mathbb{C}^n$.

If $V$ and $W$ are super vector spaces, then so are $V \oplus W$ and $V
\otimes W$ with the gradings
\[ (V \oplus W)_0 = V_0 \oplus W_0, \qquad (V \oplus W)_1 = V_1 \oplus W_1, \]
\[ (V \otimes W)_0 = V_0 \otimes W_0 \oplus V_1 \otimes W_1, \qquad (V \otimes
   W)_1 = V_0 \otimes W_1 \oplus V_1 \otimes W_0 . \]
Furthermore $\mathrm{Hom} (V, W)$ is a super vector space, where $f : V
\longrightarrow W$ is even if $f (V_0) \subseteq W_0$ and $f (V_1) \subseteq
W_1$, and odd if $f (V_0) \subseteq W_1$ and $f (V_1) \subseteq W_0$.

A \textit{superalgebra} (associative or not) is a $\mathbb{Z}_2$-graded
algebra. The grading thus satisfies $|ab| = |a| + |b|$ in $\mathbb{Z}_2$. Thus
it is a super vector space with a compatible multiplicative structure.

If $A, B$ is are superalgebras, then $A \otimes B$ is also an associative
superalgebra, with the graded multiplication defined for homogeneous elements
by
\begin{equation}
  \label{eq:shopfsign} (a \otimes b)  (c \otimes d) = (- 1)^{|b| \cdot |c|} 
  (ac \otimes bd) .
\end{equation}
Also if $V, W$ are modules for $A$ and $B$, we must use the graded tensor
product in making $V \otimes W$ into a module for $A \otimes B$. Thus if $a
\otimes b \in A \otimes B$ and $v \otimes w \in V \otimes W$ are homogeneous
\begin{equation}
  \label{eq:modulesign} (a \otimes b)  (v \otimes w) = (- 1)^{|b| \cdot |v|} 
  (av \otimes bw) .
\end{equation}
A \textit{Lie superalgebra} is a super vector space $\mathfrak{g}$ with a
graded nonassociative algebra with the ``multiplication'' denoted $\left[
\hspace{0.27em}, \hspace{0.27em} \right]$, subject to the axioms (for
homogeneous $a, b, c$)
\begin{gather*}
[a, b] = - (- 1)^{|a| |b|} [b, a], \\
(- 1)^{|a| |c|} [a, [b, c]] + (- 1)^{|a| |b|} [b, [c, a]] + (- 1)^{|b| |c|}
   [c, [a, b]] = 0.
\end{gather*}
As an example, if $A$ is an associative superalgebra then we may make the
underlying vector space of $A$ into a Lie superalgebra by defining the bracket
\[ [a, b] = ab - (- 1)^{|a| |b|} ba. \]
Taking $V$ to be a super vector space, $\mathrm{End} (V)$ with this super Lie
algebra is denoted $\mathfrak{g}\mathfrak{l} (V)$. Alternatively if $V
=\mathbb{C}^{m|n}$ we denote it $\mathfrak{g}\mathfrak{l} (m|n)$. Write
\[ \mathrm{End} (V) = \mathrm{End} (V_0) \oplus \mathrm{Hom} (V_0, V_1) \oplus
   \mathrm{Hom} (V_1, V_0) \oplus \mathrm{End} (V_1). \]
If $f = (\alpha, \beta, \gamma, \delta) \in \mathrm{End} (V)$ under this
decomposition the \textit{supertrace} is 
\[\mathrm{str} (f) := \mathrm{tr} (\alpha) - \mathrm{tr} (\delta).\]
Its kernel $\mathfrak{s}\mathfrak{l} (V)$ is a subalgebra of $\mathfrak{g}\mathfrak{l}
(V)$. If $V =\mathbb{C}^{m|n}$ then we denote this $\mathfrak{s}\mathfrak{l}
(m|n)$.

\section{The Lie superalgebra \texorpdfstring{$\mathfrak{g}\mathfrak{l} (1|1)$}{gl(1|1)}\label{sec:gl11}}

The representation theory of Lie superalgebras such as
$\mathfrak{g}\mathfrak{l} (m|n)$ is both similar and different from that of
complex reductive Lie algebras such as $\mathfrak{g}\mathfrak{l} (N)$.
Similarities include the parametrization of irreducibles by highest weights,
the Category~$\mathcal{O}$ theory, and the existence of quantized enveloping
algebras, leading to solutions of the Yang-Baxter equation. A significant
difference is the fact that finite-dimensional representations may not be
completely reducible. In the superalgebra case there is a class of Verma
modules -- the Kac modules -- that are finite-dimensional.

The Lie superalgebra $\mathfrak{g} = \mathfrak{gl}(1|1)$ has a basis $E, F, Z, H$ which will be detailed below, while
$\mathfrak{g}' = \mathfrak{sl}(1|1)$ is spanned by $E, F, Z$. The Lie superalgebra $\mathfrak{g}$
has a faithful 2-dimensional \textit{standard module} which is a
$(1|1)$-dimensional vector superspace, on which the basis elements of
$\mathfrak{g}$ are represented by the matrices
\[ E = \left( \begin{array}{cc}
     0 & 1\\
     0 & 0
   \end{array} \right), \qquad F = \left( \begin{array}{cc}
     0 & 0\\
     1 & 0
   \end{array} \right), \qquad Z = \left( \begin{array}{cc}
     1 & 0\\
     0 & 1
   \end{array} \right), \qquad H = \left( \begin{array}{cc}
     1 & 0\\
     0 & - 1
   \end{array} \right) . \]
Here $Z, H$ span the even part $\mathfrak{g}_0$ of $\mathfrak{g}$, and $E, F$
span the odd part $\mathfrak{g}_1$. The even part $\mathfrak{g}_0$ coincides
with the two-dimensional Cartan subalgebra $\mathfrak{h}$. We will also denote by
$\mathfrak{h}'$ the one-dimensional Cartan subalgebra of $\mathfrak{g}'$ spanned 
by $Z$. The Lie superalgebra bracket operation satisfies
\[ [E, F] = Z, \qquad [E, E] = [F, F] = 0, \qquad [H, E] = 2 E, \qquad [H, F] = -2 F \]
and $[Z, X] = 0$ for all $X \in \mathfrak{g}$.
Let $U (\mathfrak{g})$ and $U (\mathfrak{g}')$ be the universal enveloping
algebras, generated by the basis vectors subject to the relation
\[ [X, Y] = XY - (- 1)^{|X| |Y|} YX. \]
Now $U (\mathfrak{g}')$ has center $\mathbb{C} [Z]$ and the other generators
satisfy $EF + FE = Z$, while $E^2 = F^2 = 0$ and so as an abstract ring, it
resembles a Clifford algebra. More precisely, if $\zeta$ is any nonzero
constant then quotienting by the ideal generated by $Z - \zeta$ gives the
four-dimensional Clifford algebra of a two-dimensional quadratic space.

We will see now that the nontrivial irreducible modules of $U (\mathfrak{g})$
or $U (\mathfrak{g}')$ are all two dimensional, and are Kac modules. For
definiteness we will deal with $U (\mathfrak{g})$ but the other case is almost
the same.

Let $\mathfrak{n}_+ =\mathbb{C}E$ and $\mathfrak{n}_- =\mathbb{C}F$. Now let
$\lambda : \mathfrak{h} \longrightarrow \mathbb{C}$ be any linear functional.
We extend this to the Borel subalgebra $\mathfrak{b}=\mathfrak{h} \oplus
\mathfrak{n}_+$ by letting $\lambda (E) = 0$. This is the character of a
one-dimensional module $\mathbb{C}_{\lambda}$ of $\mathfrak{b}$. A
\textit{highest weight module $V$} of weight $\lambda$ is a module generated
by a single vector $v_{\lambda}$ such that $bv_{\lambda} = \lambda (b)
v_{\lambda}$ for $b \in \mathfrak{b}$. There is a universal highest weight
module $K (\lambda) = U (\mathfrak{g}) \otimes_{U (\mathfrak{b})}
\mathbb{C}_{\lambda}$, called the \textit{Kac module}.

By the Poincar{\'e}-Birkhoff-Witt theorem $U (\mathfrak{g}) \cong U
(\mathfrak{n}_-) \otimes U (\mathfrak{b})$ and so $K (\lambda) \cong U
(\mathfrak{n}_-)$ as a vector space via the map $\xi \mapsto \xi \otimes
v_{\lambda}$ from $U (\mathfrak{n}_-)$ to $K (\lambda)$. But $U
(\mathfrak{n}_-)$ is odd and abelian and so $U (\mathfrak{n}_-) \cong
\bigwedge \mathfrak{n}_-$, the superalgebra. Because $\dim (\mathfrak{n}_-) =
1$ the module $K (\lambda)$ is 2-dimensional. For this
$\mathfrak{g}=\mathfrak{g}\mathfrak{l} (1|1)$, the module $K (\lambda)$ is
irreducible unless $\lambda = 0$.

Now every irreducible module $V$ is generated by a highest weight vector for
some $\lambda \in \mathfrak{h}^{\ast}$, so $V$ is a quotient of the universal
highest weight module $K (\lambda)$. This implies for
$\mathfrak{g}=\mathfrak{g}\mathfrak{l} (1|1)$ that every irreducible module is
either trivial (if $\lambda = 0$) or is itself a Kac module. This is atypical
since the story for general $\mathfrak{g}\mathfrak{l} (m|n)$ is quite different.

\section{The quantized enveloping algebra \texorpdfstring{$U_q(\mathfrak{g})$}{Uq(g)}\label{sec:rmatrixderived}}

Given an associative superalgebra, with graded homomorphisms $\Delta :
\mathcal{H} \longrightarrow \mathcal{H} \otimes \mathcal{H}$ and $\eta :
\mathcal{H} \longrightarrow \mathbb{C}$ to serve as comultiplication and
counit, $\mathcal{H}$ becomes a Hopf superalgebra if the usual Hopf algebras
axioms are satisfied. It is important to pay attention to sign conventions for
tensor products, which we review.

\

With $\mathfrak{g}=\mathfrak{g}\mathfrak{l} (1|1)$ and $\mathfrak{g}'
=\mathfrak{s}\mathfrak{l} (1|1)$, the quantized enveloping algebras $U_q
(\mathfrak{g})$ and $U_q (\mathfrak{g}')$ are Hopf superalgebras.

Let $q$ be a parameter which may be an indeterminate or complex number such
that $q^4 \neq 1$. If $\zeta \in \mathbb{C}^{\times}$ we will frequently
encounter the quantity
\[ [\zeta] = \frac{\zeta - \zeta^{- 1}}{q - q^{- 1}} . \]
Let $E, F, Z$ and $H$ be as in the last section. Now let (formally) $W = q^Z$
and $K = q^H$. Then
\[ U_q (\mathfrak{g}) =\mathbb{C} [E, F, W, W^{- 1}, K, K^{- 1}], \qquad U_q
   (\mathfrak{g}') =\mathbb{C} [E, F, W, W^{- 1}] . \]
We will now describe these super Hopf algebras, following
Kwon~{\cite{KwonKac}}. We deal mainly with~$U_q(\mathfrak{g})$. Although $K$
plays a much less important role than $W$ in the calculation, at one point we
will make use of $K$.

The generators are now subject to the relations
\begin{equation}
  \label{eq:genrel1} KEK^{- 1} = q^2 E, \qquad KFK^{- 1} = q^{- 2} F,
\end{equation}
and
\begin{equation}
  \label{eq:genrel2} WEW^{- 1} = E, \qquad WFW^{- 1} = F, \qquad EF + FE =
  \frac{W - W^{- 1}}{q - q^{- 1}} .
\end{equation}
In particular, $W$ is central. We have also
\[ E^2 = F^2 = 0 \]
in the quantized enveloping algebra. The comultiplication is a homomorphism
$\Delta : \mathcal{H} \longrightarrow \mathcal{H} \otimes \mathcal{H}$ of
graded superalgebras. On the basis vectors,
\[ \Delta (W) = W \otimes W, \qquad \Delta (W^{- 1}) = W^{- 1} \otimes W^{-
   1}, \qquad \Delta (K) = K \otimes K, \qquad \Delta (K^{- 1}) = K^{- 1}
   \otimes K^{- 1}, \]
\[ \Delta (E) = E \otimes W^{- 1} + 1 \otimes E, \qquad \Delta (F) = F \otimes
   1 + W \otimes F. \]
It may be checked that $\Delta (E)$, $\Delta (F)$, $\Delta (W)$ and $\Delta
(K)$ satisfy the same relations (\ref{eq:genrel1}) and (\ref{eq:genrel2}) as
$E, F, W, K$, so this homomorphism exists. Note that checking $\Delta (E)
\Delta (F) + \Delta (F) \Delta (E)$ equals $\frac{\Delta (W) - \Delta (W)^{-
1}}{q - q^{- 1}}$ requires a cancellation that depends on the sign in
\eqref{eq:shopfsign}. The antipode is given by
\[ S (W) = W^{- 1}, \qquad S (Z) = Z^{- 1}, \qquad S (E) = - EW, \qquad S (F)
   = - W^{- 1} F. \]
The counit homomorphism $\varepsilon : \mathcal{H} \longrightarrow \mathbb{C}$
is defined by $\varepsilon (W) = \varepsilon (Z) = 1$, $\varepsilon (E) =
\varepsilon (F) = 0$.

The Hopf subalgebra $U_q (\mathfrak{g}')$ of $U_q (\mathfrak{g})$ omits the
generator $K$. Note that this is closed under both multiplication and
comultiplication. We will denote by $\mathfrak{h}$ the subalgebra of $U_q
(\mathfrak{g})$ generated by $W$ and $Z$, and by $\mathfrak{h}'$ the
subalgebra of $U_q (\mathfrak{g}')$ generated by $W$.

Given eigenvalues $\zeta$ and $\kappa$ of $W$ and $K$, respectively, there is
a universal highest weight module (i.e.\ Kac module) $V (\zeta, \kappa)$ of $U_q
(\mathfrak{g})$ that is always 2-dimensional and irreducible unless $\zeta =
\pm 1$. Similarly we will denote by $V (\zeta)$ the universal highest weight
module of $U_q (\mathfrak{g}')$ that is the universal highest weight module
for the eigenvalue $\zeta$ of $W$.

We begin by analyzing $V (\zeta, \kappa)$. The eigenvalue $\kappa$ will play
little role in the results.

\begin{proposition}
  \label{prop:basis}
  Let $\kappa, \zeta \in \mathbb{C}^{\times}$. 
  Then $U_q (\mathfrak{g})$ has a $(1|1)$-dimensional module $V (\zeta,
  \kappa)$ with basis $\{x, y\}$, where $x$ is even and $y$ is odd, and
  \begin{equation}
    \label{eq:erelations} Ex = 0, \qquad Fx = y, \qquad Wx = \zeta x, \qquad
    Kx = \kappa x,
  \end{equation}
  \begin{equation}
    \label{eq:frelations} Fy = 0, \qquad Ey = [\zeta] x, \qquad Wy = \zeta y,
    \qquad Ky = q^{- 2} \kappa y.
  \end{equation}
  Any module generated by a vector $x$ satisfying \eqref{eq:erelations} must
  also satisfy \eqref{eq:frelations} hence is a quotient of $V (\zeta,
  \kappa)$. But $V (\zeta, \kappa)$ is irreducible unless $\zeta = \pm 1$.
\end{proposition}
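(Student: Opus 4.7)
The plan is to realize $V(\zeta,\kappa)$ as an induced (Kac) module and then read off every claimed relation directly from the defining relations of $U_q(\mathfrak{g})$. Let $\mathbb{C}_{\zeta,\kappa}$ be the even one-dimensional module over the Borel $U_q(\mathfrak{b}) := \mathbb{C}\langle W^{\pm 1}, K^{\pm 1}, E\rangle \subset U_q(\mathfrak{g})$ on which $W, K, E$ act by $\zeta, \kappa, 0$, and set $V(\zeta,\kappa) := U_q(\mathfrak{g}) \otimes_{U_q(\mathfrak{b})} \mathbb{C}_{\zeta,\kappa}$. The super PBW factorization $U_q(\mathfrak{g}) \cong U_q(\mathfrak{n}_-) \otimes U_q(\mathfrak{b})$ as a right $U_q(\mathfrak{b})$-module, together with $F^2 = 0$, immediately exhibits $V(\zeta,\kappa)$ as two-dimensional with basis $x := 1 \otimes 1_{\zeta,\kappa}$ (even) and $y := F \otimes 1_{\zeta,\kappa} = Fx$ (odd).

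Next, I would verify the generator action on this basis. The relations $Ex = 0$, $Wx = \zeta x$, $Kx = \kappa x$ are built into the Borel character $\mathbb{C}_{\zeta,\kappa}$, and $Fx = y$ holds by definition. For $y$: the identity $F^2 = 0$ gives $Fy = 0$; applying the defining relation $EF + FE = (W - W^{-1})/(q - q^{-1})$ from~\eqref{eq:genrel2} to $x$, together with $Ex = 0$, yields $Ey = EFx = [\zeta] x$; the commutation $WF = FW$ gives $Wy = \zeta y$; and $KFK^{-1} = q^{-2} F$ from~\eqref{eq:genrel1} gives $Ky = q^{-2}\kappa y$. This same chain of implications simultaneously proves the universality clause: in any module generated by a vector $x$ satisfying~\eqref{eq:erelations}, the span $\mathbb{C}x + \mathbb{C}Fx$ must be closed under $U_q(\mathfrak{g})$ by PBW, and the element $y := Fx$ automatically obeys~\eqref{eq:frelations}, so such a module is a quotient of $V(\zeta,\kappa)$.

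Finally, for irreducibility, suppose $M \subset V(\zeta,\kappa)$ is a nonzero submodule. Since $W$ and $K$ act as commuting diagonalizable operators, $M$ must contain at least one joint eigenvector; because $q^4 \neq 1$ forces $\kappa \neq q^{-2}\kappa$, the only joint $(W,K)$-eigenlines are $\mathbb{C}x$ and $\mathbb{C}y$, so $M$ contains $x$ or $y$. Now $\mathbb{C}x$ is not $F$-stable ($Fx = y \neq 0$), while $\mathbb{C}y$ is $F$-stable but $E$-stable only when $[\zeta] = 0$, i.e.\ when $\zeta = \pm 1$. Hence $V(\zeta,\kappa)$ is irreducible for $\zeta \neq \pm 1$, and at the two exceptional values $\mathbb{C}y$ is a proper submodule. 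The only slightly delicate step is the super-PBW identification, which in principle carries the sign convention of~\eqref{eq:shopfsign}; but since it is applied only on the right of a single tensor factor in a fixed parity, no signs actually enter our basis computation. The genuine content of the proposition is the forced identity $Ey = [\zeta]x$, which reads off instantly from the quadratic relation in~\eqref{eq:genrel2}, and everything else is formal.
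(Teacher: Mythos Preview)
Your proof is correct, and the core derivation of \eqref{eq:frelations} from \eqref{eq:erelations} is identical to the paper's computation. You go somewhat further than the paper by explicitly realizing $V(\zeta,\kappa)$ as an induced (Kac) module and by supplying the irreducibility argument via $K$-eigenspaces (using $q^4\neq 1$ to separate the $K$-eigenvalues $\kappa$ and $q^{-2}\kappa$); the paper simply asserts existence and irreducibility and leaves both to the reader, so these are welcome completions rather than a different strategy.
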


\begin{proof}
  We leave it to the reader to check that these relations define a module.
  However we will show how the relations (\ref{eq:erelations}) imply
  (\ref{eq:frelations}); this derivation contains most of the checking that
  needs to be done to see that $V (\zeta, \kappa)$ is a module. Indeed, since
  $F^2 = 0$ we have $Fy = 0$. We have
  \[ Ey = EFx = FEx + \left( \frac{W - W^{- 1}}{q - q^{- 1}} \right) x =
     \left( \frac{W - W^{- 1}}{q - q^{- 1}} \right) x = [\zeta] x. \]
  Because $W$ is central and $Wx = \zeta x$, we must have $Wy = \zeta y$. And
  \[ Ky = KFx = q^{- 2} FKx = q^{- 2} \kappa y. \]
\end{proof}

\begin{proposition}
  \label{prop:endozz} Let $\zeta, \kappa, \zeta', \kappa'$ be nonzero complex numbers
  such that $\zeta \neq \pm 1$, $\kappa^4 \neq 1$, with the same assumptions
  on $\zeta'$ and $\kappa'$. Suppose that
  \[ \phi : V (\zeta, \kappa) \otimes V (\zeta', \kappa') \longrightarrow V
     (\zeta', \kappa') \otimes V (\zeta, \kappa) \]
  is a module isomorphism. Let $x, y$ and $x', y'$ be the standard bases of $V
  (\zeta, \kappa)$ and $V (\zeta', \kappa')$. Then $\phi$ satisfies
  \begin{equation}
    \label{eq:phiabcd} \begin{array}{l}
      \phi (x \otimes x') = a_2 x' \otimes x,\\
      \phi (x \otimes y') = c_2 x' \otimes y + b_2 y' \otimes x,\\
      \phi (y \otimes x') = b_1 x' \otimes y + c_1 y' \otimes x,\\
      \phi (y \otimes y') = a_1 y' \otimes y
    \end{array}
  \end{equation}
  where $a_1$, $a_2$, $b_1$, $b_2$, $c_1$ and $c_2$ are constants.
  
  Furthermore, given $\phi$ satisfying \eqref{eq:phiabcd} a necessary and sufficient
  condition for $\phi$ to be a module homomorphism is that the following
  identities are satisfied.
  \begin{equation}
    \label{eq:phiconditions} \begin{aligned}
      {}[\zeta] c_2 + [\zeta'] \zeta^{- 1} b_2 &= [\zeta'] a_2, & [\zeta]
      b_1 + \zeta^{- 1} [\zeta'] c_1 &= [\zeta] (\zeta')^{- 1} a_2,\\
      {}[\zeta'] \zeta^{- 1} a_1 &= (\zeta')^{- 1} [\zeta] c_2 - [\zeta'] b_1, &
      - [\zeta] a_1 &= (\zeta')^{- 1} [\zeta] b_2 - [\zeta'] c_1,\\
      a_2 &= c_1 + \zeta b_2, & \zeta' a_2 &= b_1 + \zeta c_2, \\
      c_2 -
      \zeta' b_2 &= a_1, & b_1 - c_1 \zeta' &= - a_1 \zeta .
    \end{aligned}
  \end{equation}
\end{proposition}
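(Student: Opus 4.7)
The plan is to prove this in two stages: first establish the block form \eqref{eq:phiabcd} of $\phi$ using a central element, then impose that $\phi$ commute with the remaining generators $E$ and $F$ to derive the eight conditions \eqref{eq:phiconditions}.

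For the form of $\phi$, observe that $W$ acts on the entire tensor product $V(\zeta,\kappa) \otimes V(\zeta', \kappa')$ by the scalar $\zeta \zeta'$ (since $\Delta(W) = W \otimes W$ and $W$ is central), so commuting with $W$ alone is no constraint. Instead I use $K$. Using $\Delta(K) = K \otimes K$ and the actions from Proposition~\ref{prop:basis}, the four basis vectors $x \otimes x'$, $x \otimes y'$, $y \otimes x'$, $y \otimes y'$ are $K$-eigenvectors with respective eigenvalues $\kappa\kappa'$, $q^{-2}\kappa\kappa'$, $q^{-2}\kappa\kappa'$, $q^{-4}\kappa\kappa'$, and these three distinct values really are distinct because $q^4 \neq 1$. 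The same decomposition holds on $V(\zeta',\kappa') \otimes V(\zeta,\kappa)$. Any module isomorphism $\phi$ preserves $K$-eigenspaces; the extreme eigenspaces are $1$-dimensional on each side and the middle one is $2$-dimensional, which forces the form \eqref{eq:phiabcd}.

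For the second stage, once $\phi$ has the form \eqref{eq:phiabcd} it automatically commutes with $W$ and $K$, so it is a module homomorphism if and only if it commutes with $E$ and $F$. I compute $Ev$ and $Fv$ for each of the four basis vectors $v \in \{x \otimes x', x \otimes y', y \otimes x', y \otimes y'\}$ using the coproducts $\Delta(E) = E \otimes W^{-1} + 1 \otimes E$ and $\Delta(F) = F \otimes 1 + W \otimes F$ and the sign rule \eqref{eq:modulesign}, and symmetrically on the target. Setting $\phi(Ev) = E\phi(v)$ and $\phi(Fv) = F\phi(v)$ for each $v$ and matching coefficients in the natural basis of the target produces exactly eight scalar equations: the $E$-action gives one equation from $v = x \otimes y'$, one from $v = y \otimes x'$, and two from $v = y \otimes y'$ (one coefficient each of $x' \otimes y$ and $y' \otimes x$); the $F$-action gives two from $v = x \otimes x'$ and one each from $v = x \otimes y'$ and $v = y \otimes x'$. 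The cases $E(x \otimes x') = 0$ and $F(y \otimes y') = 0$ give trivial identities. Matching against \eqref{eq:phiconditions} confirms the list, with necessity and sufficiency established simultaneously since the eight equations are exactly the full set of coefficient-matching conditions on the generators.

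The main obstacle will be tracking the signs from the graded tensor product. In particular, $(1 \otimes E)(y \otimes y') = -\,y \otimes Ey'$ and $(W \otimes F)(y \otimes x') = -\,Wy \otimes Fx'$ each pick up a sign because the odd generators $E, F$ are being moved past the odd vector $y$. Without these signs the $\phi$-equations would not close into the stated system; once they are handled correctly, the remainder is mechanical.
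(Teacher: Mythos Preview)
Your proposal is correct and follows essentially the same approach as the paper: both use the $K$-eigenspace decomposition (with eigenvalues $\kappa\kappa'$, $q^{-2}\kappa\kappa'$, $q^{-4}\kappa\kappa'$) to force the block form \eqref{eq:phiabcd}, and then impose $E$- and $F$-equivariance on each basis vector to read off the eight relations \eqref{eq:phiconditions}, with the same bookkeeping of which cases are trivial and where the graded signs enter.
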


\begin{proof}
  The $K$-eigenspaces in $V (\zeta, \kappa)$ are as follows.
  \[ \begin{array}{|l|l|l|l|}
       \hline
       \textbf{eigenvalue} & \kappa \kappa' & q^{- 2} \kappa \kappa' & q^{- 4}
       \kappa \kappa'\\
       \hline
       \textbf{eigenbasis}& x \otimes x' & \begin{array}{c}
         x \otimes y',\\
         y \otimes x'
       \end{array} & y \otimes y'\\
       \hline
     \end{array} \]
  These must be mapped by $\phi$ to the corresponding eigenspaces which proves the first statement.
  
  Now assume that $\phi$ has the form (\ref{eq:phiabcd}). For $\phi$ to be a
  module homomorphism there are several constraints, listed in the following
  table.
  \[ \def\arraystretch{1.25}\begin{array}{|l|l|}
       \hline
       \textbf{Equation} & \textbf{Conditions on parameters} \\
       \hline
       \hline
       \phi (E (x \otimes x')) = E (\phi (x \otimes x')) & \text{No conditions
       required}\\
       \hline
       \phi (E (x \otimes y')) = E (\phi (x \otimes y')) & [\zeta] c_2 +
       [\zeta'] \zeta^{- 1} b_2 = [\zeta'] a_2\\
       \hline
       \phi (E (y \otimes x')) = E (\phi (y \otimes x')) & [\zeta] b_1 +
       \zeta^{- 1} [\zeta'] c_1 = [\zeta] (\zeta')^{- 1} a_2\\
       \hline
       \phi (E (y \otimes y')) = E (\phi (y \otimes y')) & \begin{aligned}
         {}[\zeta'] \zeta^{- 1} a_1 &= (\zeta')^{- 1} [\zeta] c_2 - [\zeta']
         b_1\\
         - [\zeta] a_1 &= (\zeta')^{- 1} [\zeta] b_2 - [\zeta'] c_1,
       \end{aligned}\\
       \hline
       \phi (F (x \otimes x')) = F (\phi (x \otimes x')) & \begin{aligned}
         a_2 &= c_1 + \zeta b_2\\
         \zeta' a_2 &= b_1 + \zeta c_2
       \end{aligned}\\
       \hline
       \phi (F (x \otimes y')) = F (\phi (x \otimes y')) & c_2 - \zeta' b_2 =
       a_1\\
       \hline
       \phi (F (y \otimes x')) = F (\phi (y \otimes x')) & b_1 - c_1 \zeta' =
       - a_1 \zeta\\
       \hline
       \phi (F (y \otimes y')) = F (\phi (y \otimes y')) & \text{No conditions
       required}\\
       \hline
     \end{array} \]
  These are the conditions in (\ref{eq:phiconditions}). We will check a couple
  of these, leaving the rest to the reader. The first and last are trivial.
  Turning to the second, since $\Delta E = E \otimes W^{- 1} + 1 \otimes E$ we
  have
  \[ E (x \otimes y') = Ex \otimes W^{- 1} y' + x \otimes Ey' = x \otimes Ey'
     = [\zeta'] x \otimes x' . \]
  Applying $\phi$ to this gives $a_2 [\zeta'] x' \otimes x$. On the other hand
  \[ E (\phi (x \otimes y')) = E (c_2 x' \otimes y + b_2 y' \otimes x) = c_2
     [\zeta] x' \otimes x + [\zeta'] \zeta^{- 1} b_2 x' \otimes x \]
  and comparing we get $a_2 [\zeta'] = [\zeta] c_2 + [\zeta'] \zeta^{- 1}
  b_2$.
  
  We skip to the fourth condition. We have
  \[ E (y \otimes y') = [\zeta] (\zeta')^{- 1} x \otimes y' - [\zeta'] y
     \otimes x' . \]
  The sign of the second term comes from the fact that $E$ and $y$ are both
  odd, and (\ref{eq:modulesign}). Applying $\phi$ gives
  \[ \phi (E (y \otimes y')) = [\zeta] (\zeta')^{- 1}  (c_2 x' \otimes y + b_2
     y' \otimes x) - [\zeta']  (b_1 x' \otimes y + c_1 y' \otimes x) . \]
  On the other hand
  \[ E (\phi (y \otimes y')) = a_1 E (y' \otimes y) = a_1 [\zeta'] \zeta^{- 1}
     x' \otimes y - a_1 [\zeta] y' \otimes x. \]
  Comparing coefficients gives two identities:
  \[ \zeta^{- 1} [\zeta'] a_1 = (\zeta')^{- 1} [\zeta] c_2 - [\zeta'] b_1,
     \qquad - [\zeta] a_1 = (\zeta')^{- 1} [\zeta] b_2 - [\zeta'] c_1 \]
  We leave the remaining cases to the reader.
\end{proof}

\begin{theorem}
  \label{thm:homtwodim}Let $\zeta, \kappa$ and $\zeta', \kappa'$ be nonzero
  complex numbers such that $\zeta \neq \pm 1$, $\kappa^4 \neq 1$, with the
  same assumptions on $\zeta'$ and $\kappa'$. Let $V (\zeta, \kappa)$ and
  $V (\zeta', \kappa')$ be the corresponding $U_q(\mathfrak{g})$ modules. Then the space of homomorphisms
  \[ V (\zeta , \kappa) \otimes V (\zeta', \kappa') \longrightarrow V
     (\zeta', \kappa') \otimes V (\zeta , \kappa) \]
  is two-dimensional, spanned by $R = R_{\zeta, \kappa, \zeta', \kappa'}$, $R'
  = R_{\zeta, \kappa, \zeta', \kappa'}'$, where in the notation
  \eqref{eq:phiabcd}, we have
  \[ \begin{array}{|l||l|l|l|l|l|l|}
       \hline
       & a_1 & a_2 & b_1 & b_2 & c_1 & c_2\\
       \hline \hline
       R & - \zeta \zeta' & 1 & \zeta' & \zeta & 1 - \zeta^2 & 0\\
       \hline
       R' & 1 & - \zeta \zeta' & - \zeta & - \zeta' & 0 & 1 - (\zeta')^2\\
       \hline
     \end{array} \]
\end{theorem}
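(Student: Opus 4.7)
The plan is essentially to solve the linear system \eqref{eq:phiconditions} and identify its solution space as two-dimensional. By Proposition~\ref{prop:endozz}, the space of module homomorphisms injects into $\mathbb{C}^6$ (parametrized by $a_1, a_2, b_1, b_2, c_1, c_2$) as the kernel of the eight linear forms in \eqref{eq:phiconditions}. I would compute this kernel directly and then verify that the two explicit vectors in the table lie in it and are linearly independent.

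First, I would use the four ``$F$-equations'' (the last four lines of \eqref{eq:phiconditions}) to solve for $a_1, b_1, c_1$ in terms of $a_2, b_2, c_2$:
\begin{equation*}
  a_1 = c_2 - \zeta' b_2, \qquad c_1 = a_2 - \zeta b_2, \qquad b_1 = \zeta' a_2 - \zeta c_2.
\end{equation*}
A direct substitution shows that the fourth $F$-equation, $b_1 - c_1 \zeta' = -a_1 \zeta$, is automatically satisfied by these expressions, so the four $F$-equations impose exactly three independent constraints, parametrizing a $3$-dimensional affine slice of $\mathbb{C}^6$ by the free variables $a_2, b_2, c_2$.

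Next, I would substitute these expressions into the four ``$E$-equations'' (the first four lines of \eqref{eq:phiconditions}) and show that all of them collapse to a single condition, namely
\begin{equation*}
   [\zeta'] a_2 - [\zeta'] \zeta^{-1} b_2 - [\zeta] c_2 = 0.
\end{equation*}
The key simplification is the identity $(q - q^{-1})[\xi] = \xi - \xi^{-1}$, which lets one replace quantities like $\xi - \xi^{-1}$ that appear after substitution and cancel the remaining $E$-equations against the first one. This is the main bookkeeping step in the argument — it is routine but somewhat delicate because of the sign and $\zeta \leftrightarrow \zeta'$ asymmetries in \eqref{eq:phiconditions}. The net result is that the eight conditions reduce to exactly four independent linear equations, so the homomorphism space has dimension $6 - 4 = 2$.

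Finally, I would verify by direct substitution that both rows of the table satisfy the single constraint above (and therefore all of \eqref{eq:phiconditions}): for $R$ the left-hand side becomes $[\zeta'] - [\zeta']\zeta^{-1}\cdot \zeta - [\zeta]\cdot 0 = 0$, and for $R'$ the identity $1 - (\zeta')^2 = -\zeta'(q-q^{-1})[\zeta']$ makes the three terms cancel. Linear independence is immediate since the $c_2$-component of $R$ is $0$ while that of $R'$ is $1 - (\zeta')^2$, which is nonzero by the hypothesis $\zeta' \neq \pm 1$. Hence $R$ and $R'$ form a basis of the homomorphism space, proving the theorem.
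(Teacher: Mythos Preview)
Your proposal is correct and follows essentially the same strategy as the paper: reduce the problem to the linear system \eqref{eq:phiconditions} via Proposition~\ref{prop:endozz}, show the solution space is two-dimensional, and exhibit $R$ and $R'$ as a basis. The only organizational difference is that you perform a systematic elimination (solve the $F$-equations for $a_1,b_1,c_1$, then collapse the $E$-equations to a single constraint), whereas the paper first verifies that $R$ and $R'$ satisfy all eight conditions directly and then argues spanning by subtracting a combination of $R,R'$ from an arbitrary solution to force $c_1=c_2=0$ and deducing the rest vanishes. One small point to make explicit in a write-up: your inference ``and therefore all of \eqref{eq:phiconditions}'' for the table entries requires also checking that the listed values of $a_1,b_1,c_1$ for $R$ and $R'$ agree with your $F$-equation formulas $a_1=c_2-\zeta'b_2$, $c_1=a_2-\zeta b_2$, $b_1=\zeta'a_2-\zeta c_2$, which they do.
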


\begin{proof}
  It may be checked that the two assignments of $a_1, a_2, b_1, b_2, c_1, c_2$
  satisfy the linear equations (\ref{eq:phiconditions}). In both cases two
  equations involve both $[\zeta]$ and $[\zeta']$, and checking these requires
  some algebra using $[\zeta] / [\zeta'] = (\zeta - \zeta^{- 1}) / (\zeta' -
  (\zeta')^{- 1})$.
  
  To see that these solutions span the set of solutions to
  (\ref{eq:phiconditions}), given any solution we may subtract a linear
  combination of $R$ and $R'$ to obtain another solution with $c_2 = c_1 = 0$.
  (This requires $\zeta, \zeta' \neq \pm 1$, which we have assumed.) But if
  $c_1 = c_2 = 0$ again using $\zeta, \zeta' \neq \pm 1$, it is easy to deduce
  from the equations (\ref{eq:phiconditions}) that $a_1 = a_2 = b_2 = b_1 =
  0$.
\end{proof}

\begin{remark}
  It is easy to check that
  \begin{equation}
    \label{eq:rtwoone} R' = - \zeta \zeta' R^{- 1} .
  \end{equation}
\end{remark}

\begin{remark}
  Let $V = V (\zeta, \kappa)$ and $U = V (\zeta', \kappa')$ for short. Note that in $\mathrm{Hom} (V \otimes U, U \otimes V)$, the matrices $R$ and
  $R'$ are even graded since they map $(V \otimes U)_0$ (spanned by $x \otimes
  x'$ and $y \otimes y'$) to $(U \otimes V)_0$, and $(V \otimes U)_1$ (spanned
  by $x \otimes y'$ and $y' \otimes x$) to $(U \otimes V)_1$.
\end{remark}

Since $R$ and $R'$ do not depend on $\kappa$ and $\kappa'$ we will use the notation $R_{\zeta,\zeta'}$ and $R'_{\zeta,\zeta'}$ for when we want to specify $\zeta$ and $\zeta'$.

\begin{remark}
  If $\zeta = \zeta'$, then it is easy to check the {\emph{skein relation}}:
  \begin{equation}
    \label{eq:skein} R_{\zeta, \zeta} + R'_{\zeta, \zeta} = (1 - \zeta^2) I_{V
    (\zeta,\kappa) \otimes V (\zeta,\kappa)}.
  \end{equation}
  Combining this with $R R' = - \zeta^2 I$ we see that $R$ and $R'$ are roots
  of the polynomial $x^2 + {(\zeta^2 - 1)x} - \zeta^2$. The eigenvalues of $R$
  and $R'$ are $1$ and $- \zeta^2$, each with multiplicity 2, since $R$ and
	$R'$ both have this characteristic polynomial on both $\bigl(V (\zeta)\otimes V(\zeta)\bigr)_0$ 
	(spanned by $x \otimes x$ and $y \otimes y$) and $\bigl(V(\zeta)\otimes V(\zeta)\bigr)_1$
  (spanned by $x \otimes y$ and $y \otimes x$). Then $R^2$ and $(R')^2$ have
  eigenvalues $1$ and $\zeta^4$.
  For later use we record the following equation
  \begin{equation}
    \label{eq:doubleskein} R_{\zeta, \zeta}^2 + (R'_{\zeta, \zeta})^2 =  (R_{\zeta,\zeta} + R'_{\zeta,\zeta})^2 - 2 R_{\zeta,\zeta} R'_{\zeta,\zeta} =
    (1 +
    \zeta^4) I_{V (\zeta,\kappa) \otimes V (\zeta,\kappa)}.
  \end{equation}
\end{remark}

\section{The quantized enveloping algebra $U_q(\mathfrak{\mathfrak{sl}}(1|1))$}

Recall that $\mathfrak{g}' = \mathfrak{sl}(1|1)$. In view of the fact that 
the eigenvalue $\kappa$ of $K$ does not appear in the formulas in
Theorem~\ref{thm:homtwodim} there is no role for $\kappa$ in the theory,
and so there is no advantage to using $\mathfrak{gl}(1|1)$ instead of
$\mathfrak{sl}(1|1)$. 

Therefore we will switch to $\mathfrak{g}'$, whose modules are $V(\zeta)$.
Now we study homomorphisms of modules of $U_q (\mathfrak{g}')$. We will
require that a homomorphism of modules preserves the grading. The Kac module
$V (\zeta)$ of $U_q (\mathfrak{g}')$ can be extended in many ways to a module
$V (\zeta, \kappa)$ by choosing an eigenvalue $\kappa$ of $K$. Any of the
homomorphisms $V (\zeta, \kappa) \otimes V (\zeta', \kappa') \longrightarrow V
(\zeta', \kappa') \circ V (z, \kappa)$ constructed in
Theorem~\ref{thm:homtwodim} are {\textit{a fortiori}} homomorphisms $V (\zeta)
\otimes V (\zeta') \longrightarrow V (\zeta') \otimes V (\zeta)$ over $U_q
(\mathfrak{g}')$.
One of our main results is that there are no others.

\begin{theorem}
  \label{thm:homtwodimbis}Let $\zeta, \zeta'$ be nonzero complex numbers.
  Assume that $\zeta, \zeta', \zeta\zeta'\neq \pm 1$. 
  Then the space of $U_q (\mathfrak{g}')$-module homomorphisms $V (\zeta)
  \otimes V (\zeta') \longrightarrow V (\zeta') \otimes V (\zeta)$ is two
  dimensional, spanned by $R$ and $R'$ from Theorem~\ref{thm:homtwodim}.
\end{theorem}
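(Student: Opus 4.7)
The only structural difference from Theorem~\ref{thm:homtwodim} is that $U_q(\mathfrak{g}')$ lacks the generator $K$. In the earlier proof, the three distinct $K$-eigenvalues $\kappa\kappa'$, $q^{-2}\kappa\kappa'$, $q^{-4}\kappa\kappa'$ on $V(\zeta,\kappa) \otimes V(\zeta',\kappa')$ forced $\phi$ into the six-parameter form (\ref{eq:phiabcd}). Over $U_q(\mathfrak{g}')$ we only know that $\phi$ preserves the $\mathbb{Z}_2$-grading, so we must start from the more general eight-parameter ansatz obtained by augmenting (\ref{eq:phiabcd}) with diagonal cross terms
\[
\phi(x \otimes x') = a_2\, x' \otimes x + p\, y' \otimes y, \qquad \phi(y \otimes y') = p'\, x' \otimes x + a_1\, y' \otimes y,
\]
while retaining the existing form for $\phi(x \otimes y')$ and $\phi(y \otimes x')$ (these are the only new terms compatible with the grading). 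The entire plan is to use the $U_q(\mathfrak{g}')$-module homomorphism condition to kill the two new parameters $p$ and $p'$, at which point we reduce to the situation of Theorem~\ref{thm:homtwodim}.

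The main step, which I expect is the main obstacle, is the elimination of $p$ and $p'$ using $E$ and $F$ alone. For $p$, note that $E(x \otimes x') = 0$, and moreover $E(x' \otimes x) = 0$, while $E(y' \otimes y) = [\zeta']\zeta^{-1}\, x' \otimes y - [\zeta]\, y' \otimes x$ (these computations are carried out in the proof of Proposition~\ref{prop:endozz} using $\Delta(E) = E \otimes W^{-1} + 1 \otimes E$ with the sign rule (\ref{eq:modulesign}); the minus sign again comes from $E$ and $y'$ both being odd). The homomorphism condition $E(\phi(x \otimes x')) = \phi(E(x \otimes x')) = 0$ then forces
\[
p\bigl([\zeta']\zeta^{-1}\, x' \otimes y - [\zeta]\, y' \otimes x\bigr) = 0,
\]
and since $[\zeta], [\zeta'] \neq 0$ (using $\zeta, \zeta' \neq \pm 1$), we conclude $p = 0$. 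The argument that $p' = 0$ is symmetric: $F(y \otimes y') = 0$ while $F(y' \otimes y) = 0$ and $F(x' \otimes x) = y' \otimes x + \zeta'\, x' \otimes y$, so the analogous condition from $F$ forces $p' = 0$.

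Once $p = p' = 0$, the map $\phi$ has exactly the form (\ref{eq:phiabcd}), and the remaining homomorphism constraints from $E$ and $F$ acting on the four basis vectors reproduce verbatim the system (\ref{eq:phiconditions}): none of those calculations in Proposition~\ref{prop:endozz} involved $K$. The final step in the proof of Theorem~\ref{thm:homtwodim}, which uses $\zeta, \zeta' \neq \pm 1$ to subtract a linear combination of $R, R'$ achieving $c_1 = c_2 = 0$ and then to deduce that the remaining parameters must all vanish, applies without change and identifies the solution space as two-dimensional, spanned by $R$ and $R'$. The hypothesis $\zeta\zeta' \neq -1$ plays no direct role in this plan but likely ensures complete reducibility of $V(\zeta) \otimes V(\zeta')$ under the central element $W$, which would be needed for an alternative Schur-type argument; in any case it does no harm to assume it.
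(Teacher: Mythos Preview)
Your proof is correct and follows essentially the same route as the paper: write $\phi$ in the eight-parameter grading-preserving form, use $E(x\otimes x')=0$ and $F(y\otimes y')=0$ to kill the two extra cross terms, then invoke Proposition~\ref{prop:endozz} and Theorem~\ref{thm:homtwodim}. Your version is in fact slightly cleaner: the paper first subtracts a combination of $R,R'$ to arrange $a_1=a_2=0$ (this is where $\zeta\zeta'\neq -1$ enters), but as you implicitly observe this is unnecessary since $E(x'\otimes x)=0$ and $F(y'\otimes y)=0$ already, so the $a_2$ and $a_1$ terms drop out regardless---hence your remark that $\zeta\zeta'\neq -1$ plays no direct role is correct.
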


\begin{proof}
  We will denote by $x, y$ and $x', y'$ the graded bases of $V (\zeta)$ and $V
  (\zeta')$, as in the notation introduced in Proposition~\ref{prop:endozz}.
  Let $\phi : V (\zeta) \otimes V (\zeta') \longrightarrow V (\zeta') \otimes
  V (\zeta)$ be any endomorphism. Since $\phi$ preserves the grading, it has
  the form:
  \begin{equation}
    \label{eq:phiwithd} \begin{array}{l}
      \phi (x \otimes x') = a_2 x' \otimes x + d_1 y' \otimes y,\\
      \phi (x \otimes y') = c_2 x' \otimes y + b_2 y' \otimes x,\\
      \phi (y \otimes x') = b_1 x' \otimes y + c_1 y' \otimes x,\\
      \phi (y \otimes y') = d_2 x' \otimes x + a_1 y' \otimes y.
    \end{array}
  \end{equation}
  We will show that $d_1 = d_2 = 0$. If this is known, then the relations in
  Proposition~\ref{prop:endozz} are proved the same way, and the result
  follows as in the proof of Theorem~\ref{thm:homtwodim}.
  
  On our assumption that $\zeta \zeta' \neq \pm 1$, the homomorphisms $R$ and
  $R'$, restricted to the two-dimensional subspace spanned by $x \otimes x'$
  and $y \otimes y'$ are linearly independent, and so we may subtract a linear
  combination of them to arrange that $a_1 = a_2 = 0$. This subtraction does
  not affect $d_1$ or $d_2$, so without loss of generality $a_1 = a_2 = 0$.
  
  Since $E (x \otimes x') = 0$ we get from \eqref{eq:frelations} that
  \[ 0 = \phi (E (x \otimes x')) = E (\phi (x \otimes x')) = d_1 E (y' \otimes
     y) = d_1 (\zeta^{- 1} [\zeta'] x' \otimes y - [\zeta] x \otimes y') . \]
  On our assumption that $\zeta \neq \pm 1$ we have $[\zeta] \neq 0$ and so
  this implies that $d_1 = 0$. We prove that $d_2 = 0$ the same way
  starting with $F (y \otimes y') = 0$.
\end{proof}

\begin{proposition}
  Let $\phi : V (\zeta) \otimes V (\zeta') \longrightarrow V (\zeta') \otimes
  V (\zeta)$ be an intertwining operator. Let $a_1, a_2, b_1, b_2, c_1, c_2$
  be defined by \eqref{eq:phiabcd}. Then
  \begin{equation}
    \label{eq:freefermioniccondition} a_1 a_2 + b_1 b_2 = c_1 c_2 .
  \end{equation}
\end{proposition}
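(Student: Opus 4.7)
The plan is to apply Theorem~\ref{thm:homtwodimbis}, which (under the implicit nondegeneracy assumptions $\zeta,\zeta'\neq\pm 1$ and $\zeta\zeta'\neq -1$) guarantees that the space of intertwiners $V(\zeta)\otimes V(\zeta')\to V(\zeta')\otimes V(\zeta)$ is two-dimensional and spanned by the $R$ and $R'$ of Theorem~\ref{thm:homtwodim}. So I would write $\phi=\alpha R+\beta R'$ for scalars $\alpha,\beta$ and reduce the free-fermionic identity \eqref{eq:freefermioniccondition} to a quadratic relation in $(\alpha,\beta)$.

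By linearity and the explicit table in Theorem~\ref{thm:homtwodim}, each of the six parameters is a linear form in $(\alpha,\beta)$:
\begin{align*}
a_1 &= -\alpha\zeta\zeta'+\beta, & a_2 &= \alpha-\beta\zeta\zeta',\\
b_1 &= \alpha\zeta'-\beta\zeta, & b_2 &= \alpha\zeta-\beta\zeta',\\
c_1 &= \alpha(1-\zeta^2), & c_2 &= \beta(1-(\zeta')^2).
\end{align*}
Hence $a_1a_2+b_1b_2-c_1c_2$ is a bihomogeneous quadratic form in $(\alpha,\beta)$ with coefficients in $\mathbb{C}[\zeta^{\pm 1},(\zeta')^{\pm 1}]$. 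It suffices to verify that the $\alpha^2$, $\beta^2$, and $\alpha\beta$ coefficients all vanish.

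The $\alpha^2$ coefficient is what one gets by taking $\phi=R$, namely $(-\zeta\zeta')(1)+(\zeta')(\zeta)-(1-\zeta^2)(0)=0$. Symmetrically, plugging in $\phi=R'$ gives $(1)(-\zeta\zeta')+(-\zeta)(-\zeta')-0=0$, so the $\beta^2$ coefficient also vanishes. For the cross term, a direct expansion at $\alpha=\beta=1$ yields $a_1a_2+b_1b_2=(1-\zeta\zeta')^2-(\zeta-\zeta')^2=(1-\zeta^2)(1-(\zeta')^2)=c_1c_2$; subtracting off the already-established $\alpha^2$ and $\beta^2$ contributions (both zero) isolates the $\alpha\beta$ coefficient and shows it is zero as well.

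No step is a serious obstacle, as the argument is essentially a short bilinear algebra calculation leveraging Theorem~\ref{thm:homtwodimbis}. The only mild wrinkle is that the statement of the proposition does not explicitly repeat the hypotheses of that theorem; this can be dealt with either by reading the nondegeneracy assumptions into the hypotheses, or by observing that \eqref{eq:freefermioniccondition} is a polynomial identity in the six coefficients and $\zeta,\zeta'$, which then extends by continuity to the excluded loci.
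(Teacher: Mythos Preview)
Your proposal is correct and follows essentially the same approach as the paper: invoke Theorem~\ref{thm:homtwodimbis} to write $\phi=\alpha R+\beta R'$, read off the six parameters as the linear forms you list (the paper writes them identically with $u,v$ in place of $\alpha,\beta$), and verify the quadratic identity directly. Your decomposition into the $\alpha^2$, $\beta^2$, and $\alpha\beta$ coefficients is a slightly more structured presentation of what the paper simply records as ``can be checked,'' but the content is the same.
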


\begin{proof}
  By Theorem~\ref{thm:homtwodimbis}, $\phi$ is a linear combination $uR + vR'$
  for suitable constants $u$ and $v$. Therefore the values (\ref{eq:phiabcd})
  are
  \begin{equation}
    \label{eq:sixbw}
    \begin{aligned}
      a_1 &= v - u \zeta \zeta', \qquad & a_2 &= u - v \zeta \zeta', \\
      b_1 &= u \zeta' - v \zeta, & b_2 &= u \zeta - v \zeta', \\
      c_1 &= u (1 - \zeta^2), & c_2 &= v (1 - (\zeta')^2) . 
    \end{aligned}
  \end{equation}
  These can be checked to satisfy (\ref{eq:freefermioniccondition}).
\end{proof}

\begin{remark}\label{rem:cubic}
The condition (\ref{eq:freefermioniccondition}) is called the
\textit{free-fermionic condition}~{\cite{FanWuPhase}}. 
The space of solutions to this condition is five-dimensional, but
(\ref{eq:sixbw}) parametrizes only a four-dimensional space.
Hence these solutions do not quite exhaust the free-fermionic vertices.
Indeed, we find that the parameters in~\eqref{eq:sixbw} also satisfy the cubic relation
\begin{equation}
  b_2^3 + a_1 b_1 c_1 + a_2 b_2 c_1 + a_2 b_1 c_2 + a_1 b_2 c_2 = 2 a_1 a_2
   b_1 + \left( a_1^2 {+ a_2^2}  + b_1^2 \right) b_2. 
\end{equation}
This may seem to mean that the solutions to the Yang-Baxter equation 
that we can obtain by this method are a proper subset of all free-fermionic
Yang-Baxter equations. However if we further enlarge the set 
of Yang-Baxter equations in a trivial way by a change of
basis in the modules $V(\zeta,\kappa)$, we obtain a dense subset of
all free-fermionic Yang-Baxter equations.
\end{remark}

\section{Yang-Baxter equations}

Recall that $V(\zeta,\kappa)$ is a module for $U_q(\mathfrak{g})$,
while $V(\zeta)$ is a module for $U_q(\mathfrak{g}')$. The
restriction of the module structure on $V(\zeta,\kappa)$ to
$U_q(\mathfrak{g}')$ recovers $V(\zeta)$.
We observe two facts. 
First, because of Theorem~\ref{thm:homtwodimbis},
this restriction induces an isomorphism
\[\operatorname{Hom}_{U_q(\mathfrak{g})}(V(\zeta,\kappa)\otimes V(\zeta',\kappa'),V(\zeta',\kappa')\otimes V(\zeta,\kappa))
\xrightarrow{\sim} \operatorname{Hom}_{U_q(\mathfrak{g}')}(V(\zeta)\otimes V(\zeta'),V(\zeta')\otimes V(\zeta)),\]
since both spaces are two-dimensional and spanned by 
the intertwining operators 
$$ R,R':V(\zeta,\kappa)\otimes V(\zeta',\kappa')\to V(\zeta',\kappa')\otimes V(\zeta,\kappa). $$
Second, the matrix coefficients $a_1,a_2,b_1,b_2,c_1,c_2$ for $R$
and $R'$ given by \cref{thm:homtwodim} only involve $\zeta$ and not $\kappa$. Because of these facts,
there is no difference, for our purposes, between using $V(\zeta,\kappa)$ and
$V(\zeta)$.  In this section we will work with the modules~$V(\zeta)$.

We will now state two results regarding which of the homomorphisms $V (\zeta) \otimes V (\zeta') \longrightarrow V(\zeta') \otimes V (\zeta)$ in the two-dimensional space spanned by $R$ and $R'$ from \cref{thm:homtwodim} satisfy a Yang--Baxter equation.

\begin{theorem}[Yang--Baxter equation I]
	\label{thm:ybeone}
        Let $\zeta$, $\zeta'$ and $\zeta''$ be nonzero complex numbers different from $\pm1$.
	Let $V = V (\zeta)$,  $U = V (\zeta')$ and $W = V (\zeta'')$.
	Then
  \begin{equation}
		\label{eq:srybe} (R_{\zeta',\zeta''} \otimes I_{V(\zeta)})  (I_{V(\zeta')} \otimes R_{\zeta,\zeta''})  
		(R_{\zeta,\zeta'} \otimes I_{V(\zeta'')}) = 
		(I_{V(\zeta'')} \otimes R_{\zeta,\zeta'})  (R_{\zeta,\zeta'' } \otimes I_{V(\zeta')})  (I_{V(\zeta)} \otimes R_{\zeta',\zeta''})
  \end{equation}
  in $\mathrm{Hom} (V \otimes U \otimes W, W \otimes U \otimes V)$. The same
  identity is true with $R'$ replacing $R$.
\end{theorem}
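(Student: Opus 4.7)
The strategy is direct verification using the explicit matrix coefficients of $R$ recorded in \cref{thm:homtwodim}. Since $R$ is even (grading-preserving, as noted in the remark following \cref{thm:homtwodim}), both sides of \eqref{eq:srybe} preserve the total fermion-number grading on $V \otimes U \otimes W$. This splits the eight-dimensional space into graded subspaces of dimensions $1, 3, 3, 1$, indexed by the total number of odd tensor factors. It suffices to check \eqref{eq:srybe} on each of these four subspaces.

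The extreme one-dimensional subspaces, spanned by $x \otimes x' \otimes x''$ and $y \otimes y' \otimes y''$, are essentially immediate. On the former, each of the three factors contributes only the coefficient $a_2$, so both sides give $a_2(\zeta,\zeta')\,a_2(\zeta,\zeta'')\,a_2(\zeta',\zeta'') \cdot x'' \otimes x' \otimes x$, and this equals $x'' \otimes x' \otimes x$ because $a_2 = 1$ throughout. On the latter, only $a_1$ appears, and both sides produce the same scalar $a_1(\zeta,\zeta')\,a_1(\zeta,\zeta'')\,a_1(\zeta',\zeta'') \cdot y'' \otimes y' \otimes y$, with identical signs from the graded transposition of odd vectors.

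The substantive work is on the two $3$-dimensional weight spaces. On the weight-$1$ subspace with basis $\{y \otimes x' \otimes x'',\; x \otimes y' \otimes x'',\; x \otimes x' \otimes y''\}$, I would expand each of the six factors on the two sides of \eqref{eq:srybe} using the entries $a_1, a_2, b_1, b_2, c_1, c_2$ from \cref{thm:homtwodim} with the appropriate pairs of spectral parameters, then compare the resulting $3 \times 3$ matrices entry by entry. The nine resulting equalities reduce to elementary polynomial identities in $\zeta, \zeta', \zeta''$; crucially, no extra constraint such as the free-fermion condition needs to be invoked at this stage, because the formulas of \cref{thm:homtwodim} are already specialized to $R$. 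The weight-$2$ subspace is treated symmetrically.

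The main obstacle is sign bookkeeping from the graded tensor convention \eqref{eq:modulesign}: whenever $R \otimes I$ or $I \otimes R$ passes over an odd vector sitting in the inert slot, a factor of $-1$ is introduced. Tracking these signs consistently across the six composed operators on both sides is the most error-prone step, but once handled the polynomial identities fall out directly. An alternative would be to argue via the dimension of $U_q(\mathfrak{g}')$-intertwiners $V(\zeta)\otimes V(\zeta')\otimes V(\zeta'')\to V(\zeta'')\otimes V(\zeta')\otimes V(\zeta)$, reducing verification to a few test vectors in the spirit of \cref{thm:homtwodimbis}, but this does not obviously save work. The statement for $R'$ in place of $R$ follows by the identical argument applied to the second row of the table in \cref{thm:homtwodim}.
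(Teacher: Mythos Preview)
Your approach is correct and essentially the same as the paper's: direct verification on the eight basis vectors of $V\otimes U\otimes W$, organised (in your version) by the fermion-number grading. Two minor remarks. First, your ``main obstacle'' of Koszul signs is a non-issue: since $R$ is an \emph{even} map (as noted in the remark after \cref{thm:homtwodim}), the operators $R\otimes I$ and $I\otimes R$ carry no extra signs when acting across an odd inert slot, so the bookkeeping you flag never arises. Second, for the statement with $R'$ the paper does not repeat the computation but instead takes inverses in \eqref{eq:srybe} and invokes \eqref{eq:rtwoone}, namely $R'=-\zeta\zeta'\,R^{-1}$; this gives the $R'$ case in one line rather than a parallel nine-entry check.
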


\begin{proof}
  We have from~\cref{thm:homtwodim} that
  \[ \begin{array}{l}
       R_{\zeta,\zeta'} (x \otimes x') = x' \otimes x,\\
       R_{\zeta,\zeta'}  (x \otimes y') = \zeta y' \otimes x,\\
       R_{\zeta,\zeta'}  (y \otimes x') = \zeta' x' \otimes y + (1 - \zeta^2) y'
       \otimes x,\\
       R_{\zeta,\zeta'}  (y \otimes y') = - \zeta \zeta' y' \otimes y,
     \end{array} \]
  with similar identities for $R_{\zeta,\zeta''}$ and $R_{\zeta',\zeta''}$. From this
  (\ref{eq:srybe}) may be checked on each basis vector of $V \otimes U \otimes
  W$. Taking inverses on (\ref{eq:srybe}) and using (\ref{eq:rtwoone}) gives
  the corresponding result for~$R'$.
\end{proof}

Let $\alpha, \beta$ be complex numbers and define the ``baxterized'' $R$-matrix
\begin{equation}
  \label{eq:RRp-combination}
  R (\zeta, \zeta', \alpha, \beta) := \alpha R_{\zeta, \zeta'} + \beta R'_{\zeta, \zeta'}.
\end{equation}
This is the multi-parameter generalization of the $R$-matrix discussed in the introduction and known as
baxterization. We next confirm that it continues, under a mild hypothesis, to satisfy the Yang-Baxter equation.

\begin{theorem}[Yang--Baxter equation II]
  \label{thm:YBE2}
  Assume that $a d e = b c f$. Then
  \begin{equation}
    \label{eq:adebcfybe}
    \begin{multlined}
      \bigl(R (\zeta', \zeta'', a, b) \otimes I_{V (\zeta)}\bigr) \bigl(I_{V (\zeta')}
      \otimes R (\zeta, \zeta'', c, d)\bigr) \bigl(R (\zeta, \zeta', e, f) \otimes I_{V
      (\zeta'')}\bigr) = \\
      \bigl(I_{V (\zeta'')} \otimes R (\zeta, \zeta', e, f)\bigr) \bigl(R (\zeta, \zeta'', c,
      d) \otimes I_{V (\zeta')}\bigr) \bigl(I_{V (\zeta)} \otimes R (\zeta', \zeta'', a,
      b)\bigr).
    \end{multlined}
  \end{equation}
  This is an identity of $U_q (\mathfrak{g})$-homomorphisms
  \[ V (\zeta) \otimes V (\zeta') \otimes V (\zeta'') \longrightarrow V
     (\zeta'') \otimes V (\zeta') \otimes V (\zeta) . \]
\end{theorem}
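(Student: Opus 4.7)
The plan is to expand both sides of \eqref{eq:adebcfybe} by multilinearity in the coefficients $\alpha,\beta$ of each factor $R(\zeta_i,\zeta_j,\alpha,\beta) = \alpha R_{\zeta_i,\zeta_j} + \beta R'_{\zeta_i,\zeta_j}$. Each side becomes a sum of $2^3=8$ triple compositions indexed by $(X_1,X_2,X_3)\in\{R,R'\}^3$, with identical scalar coefficients on both sides drawn from $\{a,b\}\times\{c,d\}\times\{e,f\}$. The two pure terms $(R,R,R)$ and $(R',R',R')$ carry coefficients $ace$ and $bdf$ respectively and already satisfy the Yang--Baxter equation by Theorem~\ref{thm:ybeone}, so they cancel in the difference LHS$-$RHS. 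What remains is to show that the sum, over the six mixed choices of $(X_1,X_2,X_3)$, of the scalar-weighted differences $L(X_1,X_2,X_3)-M(X_1,X_2,X_3)$ of the two triple compositions vanishes.

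To handle the mixed terms, I would use \eqref{eq:rtwoone} to substitute $R'_{\zeta_i,\zeta_j} = -\zeta_i\zeta_j R_{\zeta_i,\zeta_j}^{-1}$ in every appearance of $R'$, so that each mixed composition becomes a scalar multiple of a triple in which each factor is $R$ or $R^{-1}$ at a specified pair of tensor positions. Repeated application of Theorem~\ref{thm:ybeone} (and its inverse, obtained by inverting the $R$-YBE) then collapses each of the six mixed differences to a scalar multiple of a single common operator. After collecting the scalars, the total coefficient turns out to be a multiple of $ade-bcf$, which vanishes under the hypothesis.

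The main obstacle in this route is the careful bookkeeping of scalars and graded signs (from \eqref{eq:shopfsign}--\eqref{eq:modulesign}) as the substitutions $R'\leftrightarrow R^{-1}$ are interleaved with the three-strand rearrangements. A more concrete alternative, should the abstract reduction prove awkward, is direct matrix calculation: since both sides of \eqref{eq:adebcfybe} preserve the $\mathbb{Z}_2$-grading, using the explicit entries \eqref{eq:sixbw} together with the action rules \eqref{eq:phiabcd} reduces the identity to two $4\times 4$ block computations, one on the even subspace spanned by $\{x\otimes x'\otimes x'',\,x\otimes y'\otimes y'',\,y\otimes x'\otimes y'',\,y\otimes y'\otimes x''\}$ and one on the odd subspace spanned by $\{x\otimes x'\otimes y'',\,x\otimes y'\otimes x'',\,y\otimes x'\otimes x'',\,y\otimes y'\otimes y''\}$. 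In each block one checks that every matrix-entry difference lies in the principal ideal generated by $ade-bcf$, which is a finite polynomial identity in $a,b,c,d,e,f,\zeta,\zeta',\zeta''$.
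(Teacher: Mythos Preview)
Your approach coincides with the paper's. Both expand multilinearly into eight monomials and dispose of the pure terms $ace$, $bdf$ via Theorem~\ref{thm:ybeone}. The paper then notes, exactly as you propose, that four of the six mixed monomials ($acf$, $bce$, $bde$, $adf$) follow from the substitution $R'_{\zeta_i,\zeta_j}=-\zeta_i\zeta_j R_{\zeta_i,\zeta_j}^{-1}$ together with a rearrangement of the $R$-YBE.

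One correction: your claim that the same trick ``collapses each of the six mixed differences to a scalar multiple of a single common operator'' is too optimistic. The monomials $ade$ (pattern $(R,R',R)$, with $R'$ in the \emph{middle} slot) and $bcf$ (pattern $(R',R,R')$) cannot be reduced individually by YBE rearrangements, because the middle factor cannot be pushed to an end by left/right multiplication. The paper therefore combines these two using the hypothesis $ade=bcf$, reducing to the identity $D_{ade}+D_{bcf}=0$. In the special case $\zeta=\zeta'=\zeta''$ this is proved via the skein relation \eqref{eq:skein} and its consequence \eqref{eq:doubleskein}; for general $\zeta,\zeta',\zeta''$ the paper falls back on the direct basis-vector check, which is precisely the alternative you offer. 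So your overall plan is sound, and your fallback is exactly what is needed to close the remaining step.
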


\begin{proof}
  This can be checked by direct calculation, by applying both sides to eight
  basis vectors of $V (\zeta) \otimes V (\zeta') \otimes V (\zeta'')$.
  
  In the special case $\zeta = \zeta' = \zeta''$ we will give an alternative
  proof based on the skein relation. On this assumption, we will abbreviate $V
  = V (\zeta)$ and $R = R_{\zeta, \zeta}$, $R' = R'_{\zeta, \zeta}$ for the
  rest of the proof.
  
  We will expand both sides of (\ref{eq:adebcfybe}) and extract the
  coefficients of 8 monomials in $a, b, c, d, e, f$. For each of the monomials
  $a c e$, $a c f$, $b c e$, $b d e$, $a d f$ and $b d f$, we deduce the
  equality of the contributions on the left- and right-hand sides of
  (\ref{eq:adebcfybe}) from the Yang-Baxter equation in
  Theorem~\ref{thm:ybeone}.
  
  For example, comparing the coefficients of $a c e$ leads to the requirement
  that
  \begin{equation}
    \label{eq:aceybe} (R \otimes I) (I \otimes R) (R \otimes I) = (I \otimes
    R) (R \otimes I) (I \otimes R) .
  \end{equation}
  This is an immediate consequence of Theorem~\ref{thm:ybeone}. Also,
  comparing the coefficients of $b d f$ is also immediately a consequence of
  Theorem~\ref{thm:ybeone}, but using the Yang-Baxter equation for~$R'$.
  
  Comparing the coefficients of $a c f$ is slightly harder. Then we need
  \begin{equation}
    \label{eq:acfybe} (R \otimes I) (I \otimes R) (R' \otimes I) = (I \otimes
    R') (R \otimes I) (I \otimes R) .
  \end{equation}
  To prove this, we start with
  \[ (I \otimes R) (R \otimes I) (I \otimes R) = (R \otimes I) (I \otimes R)
     (R \otimes I), \]
  which is an instance of Theorem~\ref{thm:ybeone}. Rearranging
  \[ (R \otimes I) (I \otimes R) (R^{- 1} \otimes I) = (I \otimes R^{- 1}) (R
     \otimes I) (I \otimes R) . \]
  Now using (\ref{eq:rtwoone}) this gives (\ref{eq:acfybe}). We leave the
  coefficients of $b c e$, $b d e$, $a d f$ to the reader.
  
  In order to get cancellation for the remaining coefficients $a d e$ and $b c
  f$, the two coefficients must be combined. Since we are assuming that $a d e
  = b c f$, we are reduced to showing that
  \begin{equation}
    \label{eq:hardterms} \begin{alignedat}{3}
      &(R \otimes I) (I \otimes R') (R \otimes I) &-& (I \otimes R) (R' \otimes
      I) (I \otimes R) & \\[-0.25em]
      {}+{} &(R' \otimes I) (I \otimes R) (R' \otimes I) &-& (I \otimes R') (R
      \otimes I) (I \otimes R') = 0.
    \end{alignedat}
  \end{equation}
  We add the following expression to (\ref{eq:hardterms}):
  \begin{equation}
    \label{eq:hardbuf} \begin{array}{c}
      (R \otimes I) (I \otimes R) (R \otimes I) - (I \otimes R) (R \otimes I)
      (I \otimes R)\\
      {}+ (R' \otimes I) (I \otimes R') (R' \otimes I) - (I \otimes R') (R'
      \otimes I) (I \otimes R').
    \end{array}
  \end{equation}
  Note that this vanishes by (\ref{eq:aceybe}) and its analogue for $R'$. Now
  using the skein relation (\ref{eq:skein}), the sum of (\ref{eq:hardterms}) and (\ref{eq:hardbuf}) is $1-\zeta^2$ times
  \[ \begin{array}{c}
       (R \otimes I) (I \otimes I \otimes I) (R \otimes I) - (I \otimes R) (I
       \otimes I \otimes I) (I \otimes R)\\
       {}+ (R' \otimes I) (I \otimes I \otimes I) (R' \otimes I) - (I \otimes
       R') ( I \otimes I \otimes I) (I \otimes R'),
     \end{array} \]
  in other words $1 - \zeta^2$ times
  \[ (R^2 \otimes I) - (I \otimes R^2) + (R')^2 \otimes I - I \otimes (R')^2 . \]
  Now using (\ref{eq:doubleskein}) the first and third terms together produce
  the scalar $(1 - \zeta^2) (1 + \zeta^4)$, which is cancelled by the second
  and fourth terms.
\end{proof}

\section{The Boltzmann weights of the Tokuyama lattice model\label{sec:tokuyama}}
In this section we will show that a particular specialization of the R-matrix \eqref{eq:RRp-combination} used in \cref{thm:YBE2} recovers the Boltzmann weights for the Tokuyama lattice models constructed in~\cite{HamelKingBijective, BrubakerBumpFriedbergHkice}.
The associated partition functions are deformations of the Weyl character formula for $\GL_r$ found by Tokuyama in~\cite{Tokuyama} and each consists of a Schur polynomial $s_\lambda$ times a deformed Weyl denominator  where $\lambda = (\lambda_1,\ldots,\lambda_r) \in \mathbb{Z}^r$ with $\lambda_1 \geq \lambda_2 \geq \cdots \geq \lambda_r \geq 0$.
Using a parameter $v$, this deformation interpolates between the standard Weyl character (or determinant) formula for $s_\lambda$ and the combinatorial formula in terms of semistandard Young tableaux.
The Tokuyama formula was expressed as a weighted sum over Gelfand--Tsetlin patterns and, viewing the sum as a partition function, these Gelfand--Tsetlin patterns are in a weight-preserving bijection with the states of the lattice model.

We will compare our Boltzmann weights with those of~\cite{BrubakerBumpFriedbergHkice} which have two different lattice models, called $\Gamma$ and $\Delta$, computing the same partition function. 
The equality of the $\Gamma$ and $\Delta$ partition functions can be proven using Yang--Baxter equations as in~\cite[Proposition~13]{BrubakerBumpFriedbergHkice}.

The lattice models in~\cite{BrubakerBumpFriedbergHkice} consist of a square grid with $r$ rows and at least $\lambda_1 + r$ columns.
Each vertex has four adjacent edges which means that we get edges around the boundary with only one endpoint attached to a vertex.
We call these edges \emph{boundary edges} in contrast to \emph{interior edges}.
A state of the lattice model corresponds to assigning an attribute to each edge from predefined sets of attributes called \emph{spin sets}.
In the case of~\cite{BrubakerBumpFriedbergHkice} both horizontal and vertical edges have the same binary spin set which we will denote by $\{\oplus, \ominus\}$ meaning plus and minus respectively.
The four edges adjacent to a vertex can be assigned according to one of six possible \emph{vertex configurations} shown in \cref{tab:boltzmann}.
The table also lists the Boltzmann weight of each configuration depending on the deformation parameter $v$ and on a complex parameter $z$ which at row $i$ is replaced by the row parameter $z_i$.
We index the rows from $1$ to $r$ starting from the top for $\Gamma$ and starting from the bottom for $\Delta$.
The Boltzmann weight of a whole state is then the product of the Boltzmann weights of its vertex configurations, and the partition function is the sum of the Boltzmann weights of all the states with some given boundary conditions. 

In this paper we use a description of the $\Gamma$ and $\Delta$ versions of the Tokuyama model such that their admissible states and the set of relevant boundary conditions are the same; they only differ in their Boltzmann weights.
The same description is used in the published version of~\cite{BrubakerBumpFriedbergHkice} while the preprint has different vertex configurations for $\Gamma$ and $\Delta$.

For our boundary conditions, we take the left and bottom boundary edges to be all $\oplus$ and the right boundary to be $\ominus$ on all $r$ rows.
For the top boundary we put $\ominus$ at the $r$ column positions labeled by integers $\lambda + \rho$ where $\rho = (r-1, r-2, \ldots, 1, 0)$ assures distinct columns. The columns are indexed from right to left starting from $0$.
For the remaining top boundary edges we take $\oplus$.
To summarize, the boundary conditions are fully specified by partitions $\lambda$ which determine the top boundary positions.

We denote the sets of admissible states with these boundary conditions by~$\mathfrak{S}_\lambda$.
For the $\Gamma$ and $\Delta$ versions we denote the corresponding partition functions by $Z^\Gamma_\lambda(\mathbf{z})$ and $Z^\Delta_\lambda(\mathbf{z})$ respectively where $\mathbf{z} = (z_1, \ldots, z_r) \in (\mathbb{C}^\times)^r$.
The boundary conditions and the vertex configurations in \cref{tab:boltzmann} are such that the $\ominus$ edges in an admissible state form paths through the lattice, and they always travel down or to the right at each vertex going from the top boundary to the right boundary.

Note that the weights in \cref{tab:boltzmann} differ slightly from those in~\cite{BrubakerBumpFriedbergHkice}: we have set all the $t_i$ in~\cite[Table~2]{BrubakerBumpFriedbergHkice} to be equal to $-v$ and a factor $\sqrt{v}$ (highlighted in blue in the table) has been shifted between the $\texttt{b}_1$ and $\texttt{b}_2$ weights.

\begin{table}[htpb]
  \centering
  \caption{Vertex configurations and Boltzmann weights for the $\Gamma$ and $\Delta$ variants of the Tokuyama model that will be used in this paper.
    They are based on \cite{BrubakerBumpFriedbergHkice} but differ slightly as described in the surrounding text and in particular we have introduced extra factors of $\sqrt{v}$ which are shown in blue.
  For readability we have marked the $\ominus$ spins with bold red edges since these edges will form paths in the lattice model.} 
  \label{tab:boltzmann}
  \begin{equation*}
    \def\vertexscale{0.75}
    \setlength\extrarowheight{3pt}
    \begin{array}{|c|c|c|c|c|c|c|}
      \hline
      & \texttt{a}_1 & \texttt{a}_2 & \texttt{b}_1 & \texttt{b}_2 & \texttt{c}_1 & \texttt{c}_2 \\ \hline 
      & 
      \begin{tikzpicture}[scale=\vertexscale]
        \draw (-1,0) node[state] {$+$} -- (0,0) node[dot] {} -- (1,0) node[state] {$+$};
        \draw (0,-1) node[state] {$+$} -- (0,1) node[state] {$+$};
      \end{tikzpicture}
      &
      \begin{tikzpicture}[scale=\vertexscale]
        \draw[ultra thick, red] (0,-1) node[state] {$-$} -- (0,1) node[state] {$-$};
        \draw[ultra thick, red] (-1,0) node[state] {$-$} -- (0,0) node[dot] {} -- (1,0) node[state] {$-$};
      \end{tikzpicture}
      &
      \begin{tikzpicture}[scale=\vertexscale]
        \draw[ultra thick, red] (0,-1) node[state] {$-$} -- (0,1) node[state] {$-$};
        \draw (-1,0) node[state] {$+$} -- (0,0) node[dot] {} -- (1,0) node[state] {$+$};
      \end{tikzpicture} 
      &
      \begin{tikzpicture}[scale=\vertexscale]
        \draw[ultra thick, red] (-1,0) node[state] {$-$} -- (0,0) node[dot] {} -- (1,0) node[state] {$-$};
        \draw (0,-1) node[state] {$+$} -- (0,1) node[state] {$+$};
      \end{tikzpicture} 
      &
      \begin{tikzpicture}[scale=\vertexscale]
        \draw[ultra thick, red] (-1,0) node[state] {$-$} -- (0,0) node[dot] {} -- (0,-1) node[state] {$-$};
        \draw (0,1) node[state] {$+$} -- (0,0) -- (1,0) node[state] {$+$};
      \end{tikzpicture} 
      &
      \begin{tikzpicture}[scale=\vertexscale]
        \draw (-1,0) node[state] {$+$} -- (0,0) node[dot] {} -- (0,-1) node[state] {$+$};
        \draw[ultra thick, red] (0,1) node[state] {$-$} -- (0,0) -- (1,0) node[state] {$-$};
      \end{tikzpicture} 
      \rule{0pt}{5em}\\ \hline
      \Gamma & 1 & z & -v/{\color{blue}\sqrt{v}} & {\color{blue}\sqrt{v}} z & (1-v)z & 1 \\ \hline
      \Delta & 1 & -vz & 1/{\color{blue}\sqrt{v}} & {\color{blue}\sqrt{v}} z & (1-v)z & 1 \\ \hline
    \end{array}
  \end{equation*}
  
\end{table}

The shifted factor $\sqrt{v}$ changes the partition function compared to \cite{BrubakerBumpFriedbergHkice} with an overall factor.
That is, we still get the Tokuyama formula with a Schur function $s_\lambda$ and a deformed Weyl denominator factor, but now with a new pre-factor $v^{\abs{\lambda}/2}$ where $\abs{\lambda} = \sum_{i=1}^r \lambda_i$. 
\begin{proposition}
  The partition functions with Boltzmann weights as in \cref{tab:boltzmann} are
  \begin{equation}
    Z^\Gamma_\lambda(\mathbf{z}) = Z^\Delta_\lambda(\mathbf{z}) = v^{\abs{\lambda}/2} \prod_{i<j} (z_i - v z_j) s_\lambda(\mathbf{z}).
  \end{equation}
\end{proposition}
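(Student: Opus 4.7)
The strategy is to reduce to the Tokuyama-type formula already proven in~\cite{BrubakerBumpFriedbergHkice}. With the specialization $t_i = -v$ in~\cite[Table~2]{BrubakerBumpFriedbergHkice}, the weights there agree with those in \cref{tab:boltzmann} in every column except $\texttt{b}_1$ and $\texttt{b}_2$, where we have divided $\texttt{b}_1$ by $\sqrt{v}$ and multiplied $\texttt{b}_2$ by $\sqrt{v}$. Consequently, for any admissible $s \in \mathfrak{S}_\lambda$ with $n_j(s)$ vertices of type $\texttt{b}_j$, our Boltzmann weight equals $v^{(n_2(s)-n_1(s))/2}$ times the BBF Boltzmann weight. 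If I can show that $n_2(s) - n_1(s)$ depends only on $\lambda$ and in fact equals $\abs{\lambda}$, then $v^{\abs{\lambda}/2}$ factors out of the sum defining the partition function, and the claim for both $\Gamma$ and $\Delta$ follows from the BBF/Tokuyama formula combined with the equality $Z^\Gamma_\lambda = Z^\Delta_\lambda$ (itself proved in \cite[Proposition~13]{BrubakerBumpFriedbergHkice} using the Yang--Baxter equation).

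The combinatorial invariant $n_2(s) - n_1(s) = \abs{\lambda}$ can be verified row by row. Let $n_j^{(i)}(s)$ denote the number of $\texttt{b}_j$ vertices in row $i$, and let $H_i$ denote the number of internal horizontal edges in row $i$ carrying spin $\ominus$. The boundary conditions together with the usual path-conservation (the $\ominus$-spins form strands moving strictly down or strictly right, one exiting each row on the right) imply that the horizontal edges just above row $i$ carry $\ominus$ at exactly $r-i+1$ columns. Reading off from the six vertex configurations which ones place $\ominus$ on the top edge, respectively on the right edge, of a vertex gives
\[ a_2^{(i)} + b_1^{(i)} + c_2^{(i)} = r - i + 1, \qquad a_2^{(i)} + b_2^{(i)} + c_2^{(i)} = H_i + 1, \]
and subtracting yields $n_2^{(i)}(s) - n_1^{(i)}(s) = H_i - (r-i)$. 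Summing over rows,
\[ n_2(s) - n_1(s) = \sum_{i=1}^r H_i - \binom{r}{2}. \]

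It remains to identify $\sum_i H_i$ with $\abs{\lambda} + \binom{r}{2}$. The $r$ paths enter at the distinct top-boundary columns $\lambda_j + r - j$ for $j = 1, \ldots, r$ and exit through the right boundary. The internal horizontal edges with spin $\ominus$ are partitioned among these paths (each such edge belongs to a unique path), and because paths move monotonically rightward and downward, the $j$-th path traverses exactly $\lambda_j + r - j$ internal horizontal edges before exiting. Summing gives $\sum_i H_i = \abs{\lambda} + \binom{r}{2}$, so $n_2(s) - n_1(s) = \abs{\lambda}$ independently of~$s$.

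The main subtlety, rather than obstacle, is bookkeeping: verifying that \cref{tab:boltzmann} under $t_i = -v$ exactly matches the published-version conventions of~\cite{BrubakerBumpFriedbergHkice} (including the description of $\mathfrak{S}_\lambda$ and the monotone top-to-right path flow), so that the BBF Tokuyama formula applies verbatim. Once this is in place, the uniform factor $v^{\abs{\lambda}/2}$ pulled out of the sum, together with $\prod_{i<j}(z_i - v z_j)s_\lambda(\mathbf{z})$ from BBF, completes the identity for both $\Gamma$ and $\Delta$.
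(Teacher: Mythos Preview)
Your reduction to the BBF formula and the identification of the task as showing $n_2(s)-n_1(s)=|\lambda|$ for every admissible state is exactly what the paper does.  Where you diverge is in \emph{how} you establish that invariant: the paper argues by repeatedly applying a local ``corner move'' that visibly preserves the $\sqrt{v}$-factor, thereby deforming any state to a canonical configuration of $r$ nested L-shaped paths and then reading off the $\texttt{b}_1$/$\texttt{b}_2$ counts directly.  Your argument instead uses two global conservation identities (counting vertices with $\ominus$ on the top edge versus on the right edge in each row) to obtain $n_2^{(i)}-n_1^{(i)}=H_i-(r-i)$, and then identifies $\sum_i H_i$ via the total horizontal displacement of the $r$ paths.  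Both arguments are correct; yours is shorter and avoids introducing the local move, while the paper's deformation is more pictorial and makes the invariance visible at the level of individual states.

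Two small points to tighten.  First, when you write ``the horizontal edges just above row $i$'' you mean the \emph{vertical} edges above row $i$ (the top edges of the row-$i$ vertices); the count $r-i+1$ is then a consequence of the edge-conservation $(\text{top}\ominus)+(\text{left}\ominus)=(\text{bottom}\ominus)+(\text{right}\ominus)$ summed across the row, together with the left $\oplus$ / right $\ominus$ boundary.  Second, your path-length claim ``the $j$-th path traverses exactly $\lambda_j+r-j$ internal horizontal edges'' is correct but deserves one sentence of justification at $\texttt{a}_2$ vertices, where two strands meet: under either the crossing or the osculating convention each horizontal $\ominus$ edge still lies on exactly one strand, and the strand entering at column $c_j$ must make exactly $c_j$ internal right-steps (plus one boundary step) regardless of the convention, since its column coordinate decreases monotonically from $c_j$ to~$0$.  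With those clarifications your argument is complete.
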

\begin{proof}
  The $\Gamma$ and $\Delta$ partition functions for the Boltzmann weights in \cite{BrubakerBumpFriedbergHkice} were there both computed as $\prod_{i<j} (z_i - v z_j) s_\lambda(\mathbf{z})$.
  It is thus enough to show that, for each state $\mathfrak{s}$ in $\mathfrak{S}_\lambda$, if we multiply the $\texttt{b}_1$ and $\texttt{b}_2$ vertex configurations by $1/\sqrt{v}$ and $\sqrt{v}$ respectively we get the same total factor of $v^{\abs{\lambda}/2}$.

  We will make an argument by considering the $r$ paths through the lattice separately.
  To then avoid ambiguities at $\texttt{a}_2$ configurations we will in this proof consider the paths to only be meeting and not crossing in that case.
  
  We will deform the $r$ paths of each state to canonical configurations in such a way that the extra $v$-factors remain unchanged and where the extra $v$-factors for the resulting canonical configurations are easy to determine. 
  From the weights in \cref{tab:boltzmann} it is not difficult to check that the total extra $v$-factors from the $\texttt{b}_1$ and $\texttt{b}_2$ configurations remain unchanged if we make the following change to a corner configuration
  \begin{equation}
    \begin{tikzpicture}[baseline=1.5em, scale=0.75]
      \draw[dashed] (-1,0) -- (3,0);
      \draw[dashed] (-1,2) -- (3,2);
      \draw[dashed] (0,-1) -- (0,3);
      \draw[dashed] (2,-1) -- (2,3);
      \draw[ultra thick, red] (0,2) -- (2,2) node[midway,state] {$-$} -- (2,0) node[midway,state] {$-$};
      \draw (0,2) -- (0,0) node[midway,state] {$+$} -- (2,0) node[midway,state] {$+$};
      \node[dot] at (0,0){};
      \node[dot] at (0,2){};
      \node[dot] at (2,0){};
      \node[dot] at (2,2){};
    \end{tikzpicture} 
    \quad
    \longrightarrow
    \quad
    \begin{tikzpicture}[baseline=1.5em, scale=0.75]
      \draw[dashed] (-1,0) -- (3,0);
      \draw[dashed] (-1,2) -- (3,2);
      \draw[dashed] (0,-1) -- (0,3);
      \draw[dashed] (2,-1) -- (2,3);
      \draw[ultra thick, red] (0,2) -- (0,0) node[midway,state] {$-$} -- (2,0) node[midway,state] {$-$};
      \draw (0,2) -- (2,2) node[midway,state] {$+$} -- (2,0) node[midway,state] {$+$};
      \node[dot] at (0,0){};
      \node[dot] at (0,2){};
      \node[dot] at (2,0){};
      \node[dot] at (2,2){};
    \end{tikzpicture}  
  \end{equation}
  where the dashed boundary edges are any admissible spins that are kept unchanged.
  Indeed, under this transformation the upper left vertex will always gain a $v$-factor of $1/\sqrt{v}$ and the lower right vertex will always gain a $v$-factor of $\sqrt{v}$ while the other vertices do not change any $v$-factors.

  By repeatedly using this transformation we may deform the left-most path to an L-shape with unchanged top and right boundary positions.
  The same procedure can be done for all paths from left to right and we end up in a canonical configuration of L-shapes with corners at columns $\lambda_1 + r - 1, \lambda_2 + r - 2, \ldots, \lambda_r$ and rows $r, r-1, \ldots, 1$.
  Since columns are counted from $0$ and rows from $1$, the number of $\texttt{b}_1$ and $\texttt{b}_2$ patterns for path $i\in\{1,\ldots,r\}$ are thus $r-i$ and $\lambda_i - (r - i)$ respectively giving a total extra factor $v^{\abs{\lambda}/2}$ for the whole state.
\end{proof}

As mentioned earlier, the Tokuyama lattice models were shown in~\cite{BrubakerBumpFriedbergHkice} to satisfy several Yang--Baxter equations.
The Yang--Baxter equation for three linear maps $R : U \otimes V \to V \otimes U$, $S : U \otimes W \to W \otimes U$ and $T : V \otimes W \to W \otimes V$ can be expressed as
\begin{equation}
  \label{eq:RST}
  (\id_W \otimes R)(S \otimes \id_V)(\id_U \otimes T) = (T \otimes \id_U)(\id_V \otimes S)(R \otimes \id_W)
\end{equation}
as maps $U \otimes V \otimes W \to W \otimes V \otimes U$.
We will sometimes denote the difference between the left-hand side and the right-hand side of this Yang--Baxter equation by $\llbracket R, S, T \rrbracket$ such that~\eqref{eq:RST} may be written as $\llbracket R, S, T \rrbracket = 0$. 
The Yang--Baxter equation is often also expressed in terms of endomorphisms in $\operatorname{End}(U \otimes V)$, $\operatorname{End}(U \otimes W)$ and $\operatorname{End}(V \otimes W)$ and then \eqref{eq:RST} takes the form of \eqref{eq:rstversion} in Section~\ref{sec:affine}.

The Yang--Baxter equation has a pictorial description in terms of lattice models where each map $R$, $S$ and $T$ corresponds to a type of vertex and where the modules $U$, $V$ and $W$ correspond to (spans of) spin sets for the corresponding edges adjoining these vertices.
\begin{equation}
  \label{eq:YBE-picture}
  \begin{tikzpicture}[baseline=-1mm, scale=0.75]
    \draw (-1, 1) node[label=left:$V$]{} -- (1,-1) -- (3,-1) node[label=right:$V$]{};
    \draw (-1,-1) node[label=left:$U$]{} -- (1, 1) -- (3, 1) node[label=right:$U$]{};
    \draw (2,2) node[label=above:$W$]{} -- (2,-2) node[label=below:$W$]{};
    \node[label=above:$R$, dot] at (0,0) {};
    \node[label=north east:$S$, dot] at (2,1) {};
    \node[label=north east:$T$, dot] at (2,-1) {};
  \end{tikzpicture}
  \quad
  =
  \quad
  \begin{tikzpicture}[baseline=-1mm,xscale=-0.75,yscale=-0.75]
    \draw (-1, 1) node[label=right:$V$]{} -- (1,-1) -- (3,-1) node[label=left:$V$]{};
    \draw (-1,-1) node[label=right:$U$]{} -- (1, 1) -- (3, 1) node[label=left:$U$]{};
    \draw (2,2) node[label=below:$W$]{} -- (2,-2) node[label=above:$W$]{};
    \node[label=above:$R$, dot] at (0,0) {};
    \node[label=north east:$S$, dot] at (2,1) {};
    \node[label=north east:$T$, dot] at (2,-1) {};
  \end{tikzpicture}
\end{equation}

It was shown in~\cite{BBB} (using more general lattice models including the Tokuyama model as a special case) that the quantum group associated to the Tokuyama model should be the affine supersymmetric quantum group $U_q(\widehat{\mathfrak{gl}}(1|1))$ where $q=\sqrt{v}$, and that the spin set for the horizontal edges at row $i$ furnishes an evaluation module $V_{z_i}$.
In more detail, they showed the following.
Interpret the $R$ in \eqref{eq:YBE-picture} as a new kind of vertex, called an $R$-vertex, connecting to four horizontal edges and let its weights be the $R$-matrix elements computed from two evaluation modules $U = V_{z_1}$ and $V = V_{z_2}$ of $U_q(\widehat{\mathfrak{gl}}(1|1))$.
Then \eqref{eq:YBE-picture} is satisfied if you interpret $S$ and $T$ as the standard vertices of \cref{tab:boltzmann} for rows $i$ and $j$.
We call the standard vertices of the lattice with two horizontal edges and two vertical edges $T$-vertices.
The pictorial description~\eqref{eq:YBE-picture} is then stating an equality of partition functions obtained from the vertices on each side of the equation given fixed boundary edges.

While $U$ and $V$ have been shown to be evaluations modules it has long been an open question what the quantum group module $W$ is for the vertical edges, and if the Boltzmann weights for the $T$-vertices in~\cite{BrubakerBumpFriedbergHkice} can be recovered from quantum group considerations alone, either from a universal $R$-matrix or other methods.

In~\cite{BrubakerBumpFriedbergHkice} the weights for the $T$-vertices were obtained by other means.
For example, the Boltzmann weights can be obtained by comparing with the weights for the Gelfand--Tsetlin patterns in the Tokuyama formula (with some potential adjustments for solvability).

As mentioned above, there are two types of $T$-weights $T_X$ with $X \in \{\Gamma, \Delta\}$, but if we allow the horizontal modules $U$ and $V$ in \eqref{eq:YBE-picture} to be different we get four different types of $R$-weights $R_{XY}$ with $X,Y \in \{\Gamma, \Delta\}$.
The different vertex configurations and weights are shown in~\cref{tab:boltzmann} for $T_X$ and in~\cref{tab:R-vertices} for $R_{XY}$.
Here the $R_{\Gamma\Gamma}$ and $R_{\Delta\Delta}$ are the standard $R$-matrices for $\Gamma$ and $\Delta$ Tokuyama ice as introduced before, but there are also the mixed types $R_{\Gamma\Delta}$ and $R_{\Delta\Gamma}$.
The latter were used to show the equality of the $\Gamma$ and $\Delta$ partition functions in~\cite[Proposition~13]{BrubakerBumpFriedbergHkice} as mentioned above.

\begin{table}[htpb]
  \centering
  \caption{Vertex configurations and weights for the different types of $R$-vertices for the Tokuyama model.
  For readability we have marked the $\ominus$ spins with bold red edges since these edges will form paths in the lattice model.
  The row parameters $z_1$ and $z_2$ are associated to the evaluation modules $U$ and $V$ respectively as illustrated in the first column.
The factors of $\sqrt{v}$ marked in blue are needed for the similarly introduced factors for the $T$-vertices. 
In particular, for $\Gamma\Gamma$ and $\Delta\Delta$ the $\texttt{b}_1$ and $\texttt{b}_2$ weights are the same.} 
  \label{tab:R-vertices}
  \begin{equation*}
    \def\vertexscale{0.5}
    \setlength\extrarowheight{3pt}
    \begin{array}{|c|c|c|c|c|c|c|}
      \hline
      & \texttt{a}_1 & \texttt{a}_2 & \texttt{b}_1 & \texttt{b}_2 & \texttt{c}_1 & \texttt{c}_2 \\ \hline 
      \begin{tikzpicture}[scale=\vertexscale]
        \node at (0,-1) {$z_1$};
        \node at (0,1) {$z_2$};
      \end{tikzpicture}
      & 
      \begin{tikzpicture}[scale=\vertexscale]
        \draw (-1,-1) node[state] {$+$} -- (0,0) node[dot] {} -- (1,1) node[state] {$+$};
        \draw (-1,1) node[state] {$+$} -- (1,-1) node[state] {$+$};
      \end{tikzpicture}
      &
      \begin{tikzpicture}[scale=\vertexscale]
        \draw[ultra thick, red] (-1,-1) node[state] {$-$} -- (1,1) node[state] {$-$};
        \draw[ultra thick, red] (-1,1) node[state] {$-$} -- (0,0) node[dot] {} -- (1,-1) node[state] {$-$};
      \end{tikzpicture}
      &
      \begin{tikzpicture}[scale=\vertexscale]
        \draw[ultra thick, red] (-1,1) node[state] {$-$} -- (1,-1) node[state] {$-$};
        \draw (-1,-1) node[state] {$+$} -- (0,0) node[dot] {} -- (1,1) node[state] {$+$};
      \end{tikzpicture} 
      &
      \begin{tikzpicture}[scale=\vertexscale]
        \draw[ultra thick, red] (-1,-1) node[state] {$-$} -- (0,0) node[dot] {} -- (1,1) node[state] {$-$};
        \draw (-1,1) node[state] {$+$} -- (1,-1) node[state] {$+$};
      \end{tikzpicture} 
      &
      \begin{tikzpicture}[scale=\vertexscale]
        \draw[ultra thick, red] (-1,-1) node[state] {$-$} -- (0,0) node[dot] {} -- (1,-1) node[state] {$-$};
        \draw (-1,1) node[state] {$+$} -- (0,0) -- (1,1) node[state] {$+$};
      \end{tikzpicture} 
      &
      \begin{tikzpicture}[scale=\vertexscale]
        \draw (-1,-1) node[state] {$+$} -- (0,0) node[dot] {} -- (1,-1) node[state] {$+$};
        \draw[ultra thick, red] (-1,1) node[state] {$-$} -- (0,0) -- (1,1) node[state] {$-$};
      \end{tikzpicture} 
      \rule{0pt}{5em}\\ \hline
      \Gamma\Gamma & z_2-v z_1 & z_1-v z_2 & (v/{\color{blue}\sqrt{v}})\,(z_1-z_2) & {\color{blue}\sqrt{v}}\,(z_1-z_2) & (1-v)z_1 & (1-v)z_2 \\ \hline
      \Delta\Delta  & z_1-v z_2 & z_2-v z_1 & (v/{\color{blue}\sqrt{v}})\,(z_1-z_2) & {\color{blue}\sqrt{v}}\,(z_1-z_2) & (1-v)z_1 & (1-v)z_2 \\ \hline
      \Gamma\Delta  & z_1-v z_2 & z_1-v z_2 & (1/{\color{blue}\sqrt{v}})\,(v^2z_2-z_1) & {\color{blue}\sqrt{v}}\,(z_1-z_2) & (1-v)z_1 & (1-v)z_2 \\ \hline
      \Delta\Gamma & z_2-v z_1 & z_2-v z_1 & (1/{\color{blue}\sqrt{v}})\,(z_2-v^2z_1) & {\color{blue}\sqrt{v}}\,(z_1-z_2) & (1-v)z_1 & (1-v)z_2 \\ \hline      
    \end{array}
  \end{equation*}
  
\end{table}

\begin{proposition}
  For all $X,Y,Z \in \{\Gamma, \Delta\}$ the following Yang--Baxter equations hold
  \begin{equation}
    \llbracket R_{XY}(z_1, z_2), T_X(z_1), T_Y(z_2) \rrbracket = 0 \qquad \llbracket R_{XY}(z_1,z_2), R_{XZ}(z_1,z_3), R_{YZ}(z_2,z_3) \rrbracket = 0.
  \end{equation}
\end{proposition}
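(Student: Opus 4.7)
The plan is to realize each of the Tokuyama Boltzmann weights (both $T_X$ and $R_{XY}$) as a specific instance of the linear combination $R(\zeta,\zeta',\alpha,\beta) = \alpha R_{\zeta,\zeta'} + \beta R'_{\zeta,\zeta'}$ from \eqref{eq:RRp-combination}, and then to verify that the constraint $ade = bcf$ appearing in \cref{thm:YBE2} is trivially satisfied for each of the Yang--Baxter equations in the proposition. Both claims then follow as immediate corollaries of \cref{thm:YBE2}.

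The first step is a uniform matching. To each line of the lattice I would assign a pair $(\zeta_*, \mu_*)$ as follows: a horizontal $\Gamma$-line with row parameter $z$ gets $(\sqrt{v}, z)$, a horizontal $\Delta$-line with row parameter $z$ gets $(-1/\sqrt{v}, -vz)$, and the vertical line gets $(0, 1)$. The claim to check is that for any two lines carrying data $(\zeta_1, \mu_1)$ and $(\zeta_2, \mu_2)$, the six Boltzmann weights of the vertex joining them coincide exactly with the six matrix coefficients of $\mu_1 R_{\zeta_1, \zeta_2} + \mu_2 R'_{\zeta_1, \zeta_2}$, using the ordering $(a_1, a_2, b_1, b_2, c_1, c_2)$ of \cref{prop:endozz}. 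This is established by plugging the six resulting pairs $(\zeta_1, \zeta_2, \mu_1, \mu_2)$ into the explicit formulas of \cref{thm:homtwodim} and comparing entry by entry with Tables~\ref{tab:boltzmann} and \ref{tab:R-vertices}. This is the main calculational step, but it is purely mechanical and no overall rescaling is required.

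With the uniform matching in place, each Yang--Baxter equation concerns three lines carrying data $(\zeta_1, \mu_1)$, $(\zeta_2, \mu_2)$, $(\zeta_3, \mu_3)$, where the vertex between any two of them is the corresponding linear combination. Matching with the notation of \cref{thm:YBE2} (which has $\zeta, \zeta', \zeta''$ corresponding to lines $1, 2, 3$) yields $(e, f) = (\mu_1, \mu_2)$, $(c, d) = (\mu_1, \mu_3)$, and $(a, b) = (\mu_2, \mu_3)$. The constraint $ade = bcf$ then reads $\mu_1 \mu_2 \mu_3 = \mu_1 \mu_2 \mu_3$, which holds by commutativity. Thus both $\llbracket R_{XY}, T_X, T_Y \rrbracket = 0$ and $\llbracket R_{XY}, R_{XZ}, R_{YZ} \rrbracket = 0$ follow directly from \cref{thm:YBE2}.

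The main obstacle is that in the equations $\llbracket R_{XY}, T_X, T_Y \rrbracket = 0$ the vertical line is assigned $\zeta'' = 0$, which is excluded from the set $\mathbb{C}^\times$ of valid $W$-eigenvalues, so $V(0)$ is not a Kac module over $U_q(\mathfrak{g}')$. However, the matrix entries of $R_{\zeta, \zeta'}$ and $R'_{\zeta, \zeta'}$ in \cref{thm:homtwodim} are polynomial in $\zeta$ and $\zeta'$, and the proof of \cref{thm:YBE2} is a direct basis-vector calculation. The asserted Yang--Baxter identity is thus a polynomial identity in the parameters $(\zeta, \zeta', \zeta'')$ and the coefficients $a, b, c, d, e, f$, and it therefore persists at $\zeta'' = 0$ by continuity. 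This degenerate limit is consistent with the asymptotic-module perspective discussed in the introduction and revisited in \cref{sec:affine}.
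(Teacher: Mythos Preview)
Your approach is correct but genuinely different from the paper's. The paper proves this proposition by brute force: it simply states that the identities reduce to a finite list of symbolic equations that can be verified by computer algebra, and points to prior literature where special cases already appear. No structural argument is given.

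Your route is more conceptual and in the spirit of the paper's own machinery. You anticipate the content of \cref{thm:alltheice} (which in the paper appears \emph{after} this proposition), namely that every $T_X$ and $R_{XY}$ is an instance of $R(\zeta,\zeta',\alpha,\beta)$ with the parameters depending only on the two lines meeting at the vertex. The key observation---that under this uniform line-labelling the six coefficients $(a,b,c,d,e,f)$ in \cref{thm:YBE2} become $(\mu_2,\mu_3,\mu_1,\mu_3,\mu_1,\mu_2)$, so that $ade=bcf$ holds tautologically---is exactly right and is what makes the argument clean. Your continuity argument for the degenerate value $\zeta''=0$ is also sound: the entries of $R_{\zeta,\zeta'}$ and $R'_{\zeta,\zeta'}$ in \cref{thm:homtwodim} are polynomial in $\zeta,\zeta'$, and the proof of \cref{thm:YBE2} is a direct basis check, so the identity is polynomial and survives specialization. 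Note that \cref{thm:alltheice} is logically independent of this proposition (its proof is a direct comparison of tables), so there is no circularity in invoking it here. What your approach buys is an explanation rather than a verification; what the paper's approach buys is that the proposition can be stated and dispatched before the matching in \cref{thm:alltheice} has been set up.
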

\begin{proof}
  The proof amounts to checking a finite (small) set of symbolic equations which can easily be done using Computer Algebra Software such as \texttt{SageMath}.
  Various subsets of these proofs also appear as (special cases) in for example~\cite{BrubakerBumpFriedbergHkice, BBB}.
\end{proof}

The following theorem shows that if we take $R(\zeta, \zeta', \alpha, \beta)$ defined in \eqref{eq:RRp-combination} as a linear combination of the two solutions $R$ and $R'$ in \cref{thm:homtwodim} then we can recover all the different $R$- and $T$-weights for both the $\Gamma$- and $\Delta$-versions of the Tokuyama lattice model.

\begin{theorem} \label{thm:alltheice}
  Let $R(\zeta, \zeta', \alpha, \beta)$ be the linear combination $\alpha R_{\zeta,\zeta'} + \beta R'_{\zeta,\zeta'}$ defined in \eqref{eq:RRp-combination}.
   Then the following specializations obtain all the different R- and T-weights of the Tokuyama lattice model of \cref{tab:boltzmann,tab:R-vertices}. 
   \begin{equation} \label{ybsystemmatches}
     \begin{array}{|c|c|c|c|c|}
       \hline
       R (\zeta, \zeta', \alpha, \beta) & \zeta & \zeta' & \alpha & \beta\\
       \hline\hline
       T_{\Gamma} & v^{1 / 2} & 0 & z & 1\\
       \hline
       T_{\Delta} & - v^{- 1 / 2} & 0 & - v z & 1\\
       \hline
       R_{\Gamma \Gamma} & v^{1 / 2} & v^{1 / 2} & z_1 & z_2\\
       \hline
       R_{\Delta \Delta} & - v^{- 1 / 2} & - v^{- 1 / 2} & - v z_1 & - v z_2\\
       \hline
       R_{\Gamma \Delta} & v^{1 / 2} & - v^{- 1 / 2} & z_1 & - v z_2\\
       \hline
       R_{\Delta \Gamma} & - v^{- 1 / 2} & v^{1 / 2} & - v z_1 & z_2\\
       \hline
     \end{array}
   \end{equation}
\end{theorem}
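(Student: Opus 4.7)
The theorem is a purely computational statement: for each of the six rows, one must check that the six matrix entries of
\[R(\zeta,\zeta',\alpha,\beta) = \alpha R_{\zeta,\zeta'} + \beta R'_{\zeta,\zeta'}\]
under the prescribed specialization agree with the corresponding Boltzmann weights in \cref{tab:boltzmann} (for the two $T$-rows) or \cref{tab:R-vertices} (for the four $R$-rows). Crucially, formula \eqref{eq:sixbw} already records $a_1,a_2,b_1,b_2,c_1,c_2$ for $R(\zeta,\zeta',\alpha,\beta)$ as explicit polynomials in $\alpha,\beta,\zeta,\zeta'$, so the theorem collapses to six substitution checks.

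Accordingly, my plan has two steps. First, I would rewrite \eqref{eq:sixbw} in the $(\alpha,\beta)$-notation of the theorem, being careful with the notational clash (the symbol $v$ in \eqref{eq:sixbw} stands for the coefficient of $R'$, whereas in the present section $v$ is the Tokuyama deformation parameter). Second, I would march through the six rows of the table in turn, substituting the prescribed values and comparing the result entry-by-entry with the Boltzmann-weight tables. For the $T_\Gamma$ and $T_\Delta$ rows the specialization sets $\zeta'=0$; since the entries in \eqref{eq:sixbw} are polynomial in $\zeta'$ this is unproblematic at the level of matrix identities, even though the module $V(0)$ is not defined, and it is consistent with the asymptotic-module viewpoint mentioned in the introduction.

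The only real obstacle is careful bookkeeping, not conceptual subtlety. The mixed rows $R_{\Gamma\Delta}$ and $R_{\Delta\Gamma}$ satisfy $\zeta\zeta'=-1$, which forces $a_1=a_2$ and produces the less symmetric $b_1$ entries of shape $v^{-1/2}(v^2 z_j - z_i)$ appearing in \cref{tab:R-vertices}. The $\Delta$-type rows involve $\zeta=-v^{-1/2}$ and $\alpha=-v z_i$, so one must track signs and half-integer powers of $v$ cleanly; in particular, a cancellation like $\beta(1-(\zeta')^2) = (-vz_j)(1-v^{-1}) = (1-v)z_j$ furnishes the $c_2$ entry in the $R_{\Gamma\Delta}$ and $R_{\Delta\Gamma}$ rows, and analogous simplifications take care of the $b_1$ entries in the $T_\Delta$ row. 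Apart from these routine algebraic manipulations, no further ingredient is required.
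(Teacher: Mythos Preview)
Your proposal is correct and follows essentially the same approach as the paper: both reduce the theorem to a direct entry-by-entry verification, with you invoking \eqref{eq:sixbw} (which packages the linear combination) while the paper points back to the table in Theorem~\ref{thm:homtwodim}. The only thing the paper adds that you leave implicit is the dictionary between the module picture and the lattice picture---specifically that the basis vectors $x,y$ correspond to the spins $\ominus,\oplus$ and an input/output reading convention that makes the labels $a_1,a_2,b_1,b_2,c_1,c_2$ in \eqref{eq:phiabcd} line up with the identically-named vertex configurations $\texttt{a}_1,\ldots,\texttt{c}_2$ in the tables; you tacitly assume this correspondence, and it would be worth stating explicitly.
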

Note the patterns for $(\zeta, \alpha)$ and $(\zeta',\beta)$ asociated to $\Gamma$ and $\Delta$ rows.
We are specializing $\zeta'\to 0$. Now $V(\zeta')$ is not defined if $\zeta'=0$, though it can be
extended to $0$ if we instead regard $V(\zeta')$ as a module of the Yangian. This is not necessary
since the Yang-Baxter equation can be deduced by continuity at $\zeta'=0$.

\begin{proof}
  The statement follows from comparing $R(\zeta, \zeta',\alpha,\beta)$ with \cref{tab:boltzmann,tab:R-vertices} using the following conventions.
  The basis vectors $x$ and $y$ of \cref{prop:basis} correspond to the edge spins $\ominus$ and $\oplus$ respectively and the weights are read out using tensored inputs $a \otimes b$ and outputs $c \otimes d$ according to 
  \begin{equation}
    \def\vertexscale{0.75}
    \begin{tikzpicture}[scale=\vertexscale, baseline]
      \draw (-1,0) node[state] {$d$} -- (0,0) node[dot] {} -- (1,0) node[state] {$a$};
      \draw (0,-1) node[state] {$b$} -- (0,1) node[state] {$c$};
      \draw[thick, decorate, decoration={brace, mirror, raise=12pt}] (0,-1) -- (1,0) node[pos=0.5, below=14pt, sloped] {input}; 
      \draw[thick, decorate, decoration={brace, mirror, raise=12pt}] (0,1) -- (-1,0) node[pos=0.5, above=14pt, sloped] {output};
    \end{tikzpicture}
    \qquad \qquad
    \begin{tikzpicture}[scale=\vertexscale, baseline]
      \draw (-1,-1) node[state] {$d$} -- (0,0) node[dot] {} -- (1,1) node[state] {$a$};
      \draw (1,-1) node[state] {$b$} -- (-1,1) node[state] {$c$};
      \draw[thick, decorate, decoration={brace, raise=12pt}] (1,1) -- (1,-1) node[pos=0.5, above=14pt, sloped] {input}; 
      \draw[thick, decorate, decoration={brace, mirror, raise=12pt}] (-1,1) -- (-1,-1) node[pos=0.5, below=14pt, sloped] {output};
    \end{tikzpicture}
  \end{equation}
  The module isomorphisms labelings $a_1$, $a_2$, $b_1$, $b_2$, $c_1$ and $c_2$ in \eqref{eq:phiabcd} for $R$ and $R'$ then agrees with the labelings of the vertex configurations in~\cref{tab:boltzmann,tab:R-vertices}.
  Using the table in \cref{thm:homtwodim} the statement is then easily verified. 
\end{proof}

\section{Coincidence with Affine $R$-matrices\label{sec:affine}}

Having built families of $R$-matrices from pairs of modules for $U_q(\mathfrak{g})$, the next two sections explore the extent to which these families exhaust or lead to interesting solutions to parametrized Yang-Baxter equations from other sources. In this section, we explore this question for modules of the affine superalgebra $U_q(\hat{\mathfrak{g}})$. Indeed, any such triple of modules (call the underlying vector spaces $U, V, W,$ respectively) gives rise to $R$-matrices for each of the three pairs and corresponding Yang-Baxter equations. If we name these endomorphisms $R \in \textrm{End}(U \otimes V)$, $S \in \textrm{End}(U \otimes W)$, and $T \in \textrm{End}(V \otimes W)$, then in the larger space $\textrm{End}(U \otimes V \otimes W)$, we have
\begin{equation} R_{12} S_{13} T_{23} = T_{23} S_{13} R_{12}, \label{eq:rstversion} \end{equation}
where $R_{12} := R \otimes \textrm{Id}$ denotes the action of $R$ on $U \otimes V$ while acting by the identity on $W$, and similarly for $S_{13}$ and $T_{23}$. So it is natural to ask: to what extent do the solutions of the previous section (which were built solely from information from the finite-dimensional {\it non-affine} modules of $U_q(\mathfrak{g})$) exhaust the set of Yang-Baxter equations arising from the {\it affine} setting of $U_q(\hat{\mathfrak{g}})$ modules? Our first step is to collect several such families from affine superalgebras.

\subsection{$R$-matrices from FRT presentations of affine superalgebras}

Combining the work of Faddeev, Reshetikhin and Takhtajan \cite{FRT1, FRT2} and
Reshetikhin and Semenov-Tian-Shansky \cite{ReshetikhinSTS} gives a realization of the Hopf
algebra $U_q(\hat{\mathfrak{g}})$, the affine Kac-Moody algebra associated to a
simple Lie algebra $\mathfrak{g}$, using solutions of (parametrized)
Yang-Baxter equation. Thus we refer to such a presentation with relations from
Yang-Baxter equations as an ``FRT presentation.'' 
On the other hand, Drinfeld~\cite{DrinfeldYangians} gave another presentation
of these affine quantum groups that includes Yangians and is well adapted to the
study of level 0 representations, such as Kirillov-Reshetikhin modules.
See also~\cite{ChariHernandez}.
Ding and Frenkel \cite{DingFrenkel} showed that the FRT presentation coincides with the
Drinfeld presentation for $U_q(\hat{\mathfrak{g}})$, a presentation in the
style of the Chevalley-Serre presentation for simple Lie algebras.

We review the presentation in the case of the Lie superalgebra $U_q :=
U_q(\widehat{\mathfrak{gl}}(1|1))$ (identified with the Drinfeld-type
presentation in \cite{CaiWangWuZhaoDrinfeld}) as given in \cite{HuafengZhangGL11}, and then
proceed to quickly derive examples of $R$-matrices from its modules.
Yamane \cite{LinYamaneZhang} demonstrates the equivalence of the Drinfeld and
Drinfeld-Jimbo presentations. The latter shows that our superalgebra
$U_q(\widehat{\mathfrak{gl}}(1|1))$ as defined in Section~\ref{sec:gl11} is a 
Hopf subalgebra.

Let $R(z,w)$ be the Perk-Schultz matrix, viewed as an element of $\textrm{End} (V \otimes V)$ where $V = \mathbb{C} v_1 \oplus \mathbb{C} v_2$ is a two-dimensional super vector space with $\mathbb{Z} / 2 \mathbb{Z}$ grading $|v_1| = 0$, the even graded piece, and $|v_2| = 1$, the odd graded piece. It takes the form
\begin{multline} R(z,w) = \sum_{i=1}^2 (z q_i - w q_i^{-1}) E_{ii} \otimes E_{ii} + (z-w) \sum_{i \ne j} E_{ii} \otimes E_{jj} + \\
z (q - q^{-1}) E_{21} \otimes E_{12} + w (q^{-1} - q) E_{12} \otimes E_{21}
\end{multline}
where $q_1 = q$, $q_2 = q^{-1}$ and $E_{ij}$ is the endomorphism with $E_{ij}(v_k) = \delta_{jk} v_i$, matching left multiplication by the corresponding elementary matrix.
In matrix form, ordering the basis vectors lexicographically $\{ v_1 \otimes v_1, v_1 \otimes v_2, v_2 \otimes v_1, v_2 \otimes v_2 \}$ left-to-right in columns and top-to-bottom in rows:
\begin{equation} R(z,w) = \begin{pmatrix} zq - wq^{-1} & & & \\ & z-w & w(q^{-1} - q) & \\ & z (q - q^{-1}) & z-w & \\ & & & z q^{-1} - w q  \end{pmatrix}. \label{eq:rzw} \end{equation}
Note that $R(z,w)$ is homogeneous of degree one in $z$ and $w$ and differs by a scalar from the $R$-matrix $R(x)$ for the standard representation of $U_q$ with $x = w/z$.
\begin{definition}[Zhang, \cite{HuafengZhangGL11}] 
  \label{def:FRT}
The quantum affine superalgebra $U_q$ is the superalgebra with generators $s^{(n)}_{ij}$ and $t^{(n)}_{ij}$ with $i, j = 1,2$ and $n \geqslant 0$, satisfying the relations:
\begin{eqnarray*} R_{23}(z,w) T_{12}(z) T_{13}(w) & = & T_{13}(w) T_{12}(z) R_{23}(z,w) \\
R_{23}(z,w) S_{12}(z) S_{13}(w) & = & S_{13}(w) S_{12}(z) R_{23}(z,w) \\
R_{23}(z,w) T_{12}(z) S_{13}(w) & = & S_{13}(w) T_{12}(z) R_{23}(z,w)  \end{eqnarray*}
$$ t_{12}^{(0)} = s_{21}^{(0)} = 0, \quad t_{ii}^{(0)} s_{ii}^{(0)} = 1 = s_{ii}^{(0)} t_{ii}^{(0)} \quad \text{for $i=1,2$.} $$
Here $T(z) = \sum_{i,j} t_{ij}(z) \otimes E_{ij} \in (U_q \otimes \operatorname{End}(V))[[z^{-1}]]$ and $t_{ij}(z) = \sum_{n \geqslant 0} t_{ij}^{(n)} z^{-n}$, and similarly for $S(z)$ with $z^{-n}$ replaced by $z^n$. Thus our relations take place in $(U_q \otimes \operatorname{End}(V \otimes V))[[z,z^{-1},w,w^{-1}]]$.
\end{definition}

Note that the version of the Yang-Baxter equations in the definition above differ from those in~\eqref{eq:rstversion}, as Zhang is applying the $R$-matrix to the second and third tensor factors in the triple tensor product. There is a natural co-product for which $U_q$ is a Hopf superalgebra, but we do not explicitly use that here. The fact that this FRT presentation coincides with the Drinfeld presentation for $U_q$ is proved in~\cite{CaiWangWuZhaoDrinfeld}.

\begin{theorem} Let $\pi : U_q \longrightarrow \operatorname{End}(W)$ with $s_{ij}(z) \mapsto \pi(s_{ij}(z))$ and similarly for $t_{ij}(z)$. Then if we define
$$ \pi(S)(z) = \sum_{i,j = 1,2} \pi(s_{ij}(z)) \otimes E_{ij} \in \operatorname{End}(W \otimes V)[[z]], $$
and similarly define $\pi(T)$, then the following relations hold in $\operatorname{End}(W \otimes V \otimes V)[[z,z^{-1},w,w^{-1}]]$:
\begin{eqnarray*} R_{23}(z,w) \pi(T)_{12}(z) \pi(T)_{13}(w) & = & \pi(T)_{13}(w) \pi(T)_{12}(z) R_{23}(z,w) \\
R_{23}(z,w) \pi(S)_{12}(z) \pi(S)_{13}(w) & = & \pi(S)_{13}(w) \pi(S)_{12}(z) R_{23}(z,w) \\
R_{23}(z,w) \pi(T)_{12}(z) \pi(S)_{13}(w) & = & \pi(S)_{13}(w) \pi(T)_{12}(z) R_{23}(z,w)  \end{eqnarray*}
\end{theorem}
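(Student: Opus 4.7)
The plan is to derive the representation-level FRT relations as the direct image of the abstract FRT relations in $U_q$ under the homomorphism $\pi$. Since $\pi : U_q \longrightarrow \operatorname{End}(W)$ is a morphism of Hopf superalgebras, it extends to the $U_q$-tensor factor in $U_q \otimes \operatorname{End}(V \otimes V)$ (and its formal-power-series completion), and this extension should transport each abstract identity to the corresponding identity in $\operatorname{End}(W \otimes V \otimes V)$.

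First I would unpack the abstract relation
\[
R_{23}(z,w)\,T_{12}(z)\,T_{13}(w) = T_{13}(w)\,T_{12}(z)\,R_{23}(z,w)
\]
using $T(z) = \sum_{i,j} t_{ij}(z) \otimes E_{ij}$ and $t_{ij}(z) = \sum_{n \ge 0} t_{ij}^{(n)} z^{-n}$. Expanding both sides and equating the coefficient of each basis element $E_{ij} \otimes E_{k\ell}$ of $\operatorname{End}(V) \otimes \operatorname{End}(V)$ and of each monomial $z^{-m}w^{-n}$ produces a family of identities in $U_q$ between words in the $t_{ij}^{(n)}$ (with scalar coefficients coming from the matrix entries of $R_{23}$). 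Applying the homomorphism $\pi$ coefficient by coefficient gives the same family of identities in $\operatorname{End}(W)$, and reassembling them into matrix form produces the desired relation
\[
R_{23}(z,w)\,\pi(T)_{12}(z)\,\pi(T)_{13}(w) = \pi(T)_{13}(w)\,\pi(T)_{12}(z)\,R_{23}(z,w)
\]
in $\operatorname{End}(W \otimes V \otimes V)$. Here the slot-1 factor, previously occupied by $U_q$, is now occupied by $\operatorname{End}(W)$, so $T_{ij}(z)$ correctly becomes $\pi(T)_{ij}(z)$; and $R_{23}(z,w)$ is unchanged because it had no $U_q$ content to begin with. The identical argument, applied to the other two FRT relations, yields the $SS$ and $TS$ analogues.

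The step requiring the most care is the bookkeeping of graded signs. Because the tensor products $U_q \otimes \operatorname{End}(V) \otimes \operatorname{End}(V)$ and $\operatorname{End}(W) \otimes \operatorname{End}(V) \otimes \operatorname{End}(V)$ are superalgebras, multiplication of elementary tensors picks up Koszul signs governed by the parities of the entries, and the generator $t_{ij}^{(n)}$ has parity $|i| + |j|$ matching that of $E_{ij}$. The main check is therefore that $\pi$ is an \emph{even} (parity-preserving) homomorphism, so that $|\pi(t_{ij}^{(n)})| = |t_{ij}^{(n)}|$, ensuring every sign incurred on the abstract side is reproduced verbatim on the representation side. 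Once that is confirmed, the rest is just extending $\pi$ entry-by-entry and coefficient-by-coefficient in $z^{-1}$ and $w^{-1}$, which is a purely formal operation since the relations hold at each order of the formal parameters separately. This is really a special case of the general principle that any representation of an FRT-presented quantum group automatically yields an $RTT$-relation involving its associated $L$-operators.
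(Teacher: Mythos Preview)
Your proposal is correct and follows essentially the same approach as the paper, which gives a one-line proof: ``$\pi$ is an algebra map, so since the relations in the above proposition hold, they continue to hold upon applying $\pi$.'' Your version is simply a more careful unpacking of that sentence, including the useful explicit observation that $\pi$ must be even so that the Koszul signs match; the one minor slip is calling $\pi$ a morphism of Hopf superalgebras when only the (super)algebra structure is relevant and $\operatorname{End}(W)$ carries no coproduct here.
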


\begin{proof} $\pi$ is an algebra map, so since the relations in Definition~\ref{def:FRT} are satisfied in $(U_q \otimes \operatorname{End}(V \otimes V))[[z,z^{-1},w,w^{-1}]]$, they continue to hold upon applying $\pi$ in the resulting space $\operatorname{End}(W \otimes V \otimes V)[[z,z^{-1},w,w^{-1}]]$.
\end{proof}

For example, we may take the representation $\pi_{c,d}$ of $U_q$ on the two-dimensional vector space $W$ with
$$ \pi_{c,d}(s_{ij}(z))_{1 \leqslant i,j \leqslant 2} = \begin{pmatrix} c \frac{1 - zd}{1 - z d c^2} E_{11} + c \frac{q^{-1} - zdq}{1 - zd c^2} E_{22} & c \frac{(q^{-1} - q)(dc^2-d)}{1-zdc^2} E_{12} \\ \frac{-z}{1-zdc^2} E_{21} & E_{11} + \frac{q^{-1}-zdc^2 q}{1-zdc^2} E_{22} \end{pmatrix} $$
and $\pi_{c,d}(t_{ij}(z))$ differs from this by a rational function in $c, d,$ and $z$, independent of~$q$. 
This representation appears on p. 1601 of Zhang \cite{HuafengZhangGL11}. This same representation appears under 
the name $\rho_{a,b}$ on p. 1144 of \cite{HuafengZhangRTT}, with $a = d$ and $b = d c^2$, under a scalar 
change of basis $(v_1, v_2) \mapsto (c v_1, v_2)$. In \cite{HuafengZhangRTT}, it is explained that the 
family $\rho_{a,b}$ with $a, b \in \mathbb{C}$ and $a \ne b$ give all prime, simple modules in the category 
of finite-dimensional representations of the super Yangian $Y_q(\mathfrak{gl}(1|1))$, the subalgebra of $U_q$ 
generated by the $s_{ij}^{(n)}$ for $n \geqslant 0$ and $(s_{ii}^{(0)})^{-1}$. Indeed they give all possible non-trivial 
linear factors in any associated Drinfeld polynomial. This demonstrates that the representations $\pi_{c,d}$ result 
in a rich collection of $U_q$ modules that includes, in particular, Kirillov-Reshetikhin modules in the special case 
where the associated highest weight is a multiple of a fundamental weight.

Let us clear denominators by multiplying by the scalar $1-z dc^2$ and then consider the resulting form of $\pi_{c,d}(S)$:
$$ \begin{pmatrix} c (1 - zd) E_{11} \otimes E_{11} + c (q^{-1} - zdq) E_{22} \otimes E_{11} & c (q^{-1} - q)(dc^2-d) E_{12} \otimes E_{12} \\ -z E_{21} \otimes E_{21} & (1-zdc^2) E_{11} \otimes E_{22} + (q^{-1}-zdc^2 q) E_{22} \otimes E_{22} \end{pmatrix} $$
with corresponding $R$-matrix as an action on $W \otimes V$ 
\begin{equation} \begin{pmatrix} c (1-zd) & & &  \\ & c(q^{-1} - zdq) & c (q^{-1} - q)(dc^2-d) & \\ & -z & 1 - zdc^2 & \\ & & & q^{-1} - z d c^2 q \end{pmatrix}. \label{eq:rcd} \end{equation}
Indeed, this is because the second tensor factor is specifying which connected $2 \times 2$ minor in the above matrix 
that the $\pi(s_{i,j})$ is acting in. By our previous theorem, the above matrix satisfies a Yang-Baxter equation of 
the second type in our list of three relations, using the Perk-Schultz (i.e., $U_q$ fundamental representation) $R$-matrix.
This matrix satisfies a graded solution to the Yang-Baxter equation, and we may obtain an ungraded solution by negating the $(1,1)$ entry above. We refer to the resulting matrix as the ``ungraded'' $R$-matrix.

\begin{proposition} Given any representation $(\pi_{c,d}, W)$ as above, and standard module $V$, the corresponding ungraded $R$-matrix in $\textrm{End}(W \otimes V)$ is a member of the family of $R$-matrices in~\eqref{eq:RRp-combination} for some choice of parametrizing data $\zeta, \zeta', \alpha, \beta$.
\end{proposition}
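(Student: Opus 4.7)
The plan is to exhibit explicit parameters $(\zeta, \zeta', \alpha, \beta)$ and, if needed, a diagonal change of basis on $V(\zeta, \kappa)$ or $V(\zeta', \kappa')$, so that the resulting $R(\zeta, \zeta', \alpha, \beta)$ from \eqref{eq:RRp-combination} agrees with the ungraded $R$-matrix entry-by-entry. Identifying $v_1 \leftrightarrow x$ and $v_2 \leftrightarrow y$ on each tensor factor, and composing with the swap $P$ to reconcile the family's $R : V(\zeta)\otimes V(\zeta') \to V(\zeta') \otimes V(\zeta)$ with the non-swapping FRT $R$-matrix, one reads off the six parameters of \eqref{eq:phiabcd}:
\begin{gather*}
a_2 = -c(1-zd), \quad b_2 = c(q^{-1}-zdq), \quad b_1 = 1-zdc^2, \quad a_1 = q^{-1}-zdc^2 q, \\
c_1 = cd(q^{-1}-q)(c^2-1), \quad c_2 = -z.
\end{gather*}

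I would first verify directly that these satisfy the free-fermion identity $a_1 a_2 + b_1 b_2 = c_1 c_2$ from \eqref{eq:freefermioniccondition}; both sides expand to $czd(q-q^{-1})(c^2-1)$. This places the affine $R$-matrix on the free-fermion locus, and the remaining task is to locate it inside the subfamily parametrized by \eqref{eq:sixbw}.

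Next, one solves for $\zeta, \zeta', \alpha, \beta$ using \eqref{eq:sixbw}. The combinations $a_1 + a_2 = (1-cq)(q^{-1}+zdc)$ and $b_2 - b_1 = (c-q)(q^{-1}+zdc)$ share a common factor, so the ratio $(1-\zeta\zeta')/(\zeta-\zeta')$ evaluates to $(1-cq)/(c-q)$; analogously $-(1+\zeta\zeta')/(\zeta+\zeta') = (1+cq)/(q+c)$. Combined with the identity $(\zeta+\zeta')^2 - (\zeta-\zeta')^2 = 4\zeta\zeta'$, this yields a quadratic in $\zeta\zeta'$ with roots $cq$ and $(cq)^{-1}$; the first choice gives $\zeta = -q$ and $\zeta' = -c$. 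Then $a_1 + a_2 = (\alpha+\beta)(1-cq)$ and $a_1 - a_2 = (\beta-\alpha)(1+cq)$ force $\alpha = zdc$ and $\beta = q^{-1}$, and direct substitution into \eqref{eq:sixbw} reproduces $a_1, a_2, b_1, b_2$ exactly.

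The resulting family values $c_1 = -zdc(q^2-1)$ and $c_2 = -q^{-1}(c^2-1)$ generally differ from the affine $c_1, c_2$, but their product coincides with the affine $c_1 c_2$ by the free-fermion identity. As observed in Remark~\ref{rem:cubic}, a diagonal rescaling of the basis of $V(\zeta, \kappa)$ or $V(\zeta', \kappa')$ multiplies $c_1$ by a scalar $r$ and $c_2$ by $r^{-1}$ while leaving $a_1, a_2, b_1, b_2$ fixed, so the remaining discrepancy is absorbed by such a rescaling. The main obstacle is the free-fermion verification at the start; once it is in hand, the parameter extraction and rescaling are routine.
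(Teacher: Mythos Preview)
Your approach is essentially the same as the paper's: match the entries of the ungraded affine $R$-matrix against the parametrization \eqref{eq:sixbw}, solve for $(\zeta,\zeta',\alpha,\beta)$, and absorb the remaining discrepancy in $c_1,c_2$ by a diagonal change of basis since the products agree. The paper simply asserts the parameters $\zeta=q$, $\zeta'=c$, $\alpha=q^{-1}$, $\beta=zdc$ and notes that the off-diagonal products match, whereas you derive the (equivalent, under the symmetry $(\zeta,\zeta',\alpha,\beta)\mapsto(-\zeta,-\zeta',\alpha,\beta)$ together with a swap of the $a_i,b_i$ labeling) choice $\zeta=-q$, $\zeta'=-c$, $\alpha=zdc$, $\beta=q^{-1}$ and add the preliminary free-fermion check; these are cosmetic differences, not a different method.
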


\begin{proof} 
We may solve for the explicit choice of these parameters by comparing the resulting matrices and we find that setting
$$ \beta = zdc, \quad \alpha = q^{-1}, \quad \zeta = q, \quad \zeta' = c, $$
we obtain equality of the two matrices in the statement of the proposition, up to a change of basis. Indeed the product of the off-diagonal terms agree.
\end{proof}

It may be advantageous, for certain points of view, to have a precise understanding of the algebraic origins of a given solution to the Yang-Baxter equation. But if one simply wants a supply of $R$-matrices satisfying the Yang-Baxter equation, then the previous proposition shows that our method of generating them from linear combinations of modules of the finite algebra produces a large class of $R$-matrices from the affine algebra $U_q$.

\subsection{Tokuyama ice as limiting cases} We now discuss certain limits of the
$R$-matrix appearing in~(\ref{eq:rcd}), which arose in the
previous section in connection with certain lattice models used to represent
generating function identities as in \cite{Tokuyama}. First consider the
specialization of the $R$-matrix in~(\ref{eq:rcd}) with $d = c^{-1}$:
$$ R_{c,q}(z) := \begin{pmatrix} c-z & & &  \\ & c q^{-1} - z q & (q^{-1} - q)(c^2-1) & \\ & -z & 1 - zc & \\ & & & q^{-1} - z c q \end{pmatrix}. $$
Our work from the previous subsection ensures that $R_{c,q}$
satisfies the following (graded) parametrized Yang-Baxter equation for any $c \in \mathbb{C}^\times$:
\begin{equation} R(z,w)_{12} R_{c,q}(z)_{13} R_{c,q}(w)_{23} = R_{c,q}(w)_{23} R_{c,q}(z)_{13} R(z,w)_{12}, \label{paramybeinc} \end{equation}
with $R(z,w)$ as in~(\ref{eq:rzw}). 
We now create two new $R$-matrices ${R}^\Gamma_{c,q}(z)$ and ${R}^\Delta_{c,q}(z)$. For ${R}^\Gamma_{c,q}(z)$, we perform a linear change of variables, moving an overall factor of $c^2-1$ from the weight in entry $(2,3)$ to the weight in entry $(3,2)$,
thus transforming $R_{c,q}(z)$ to the matrix
$$ {R}^\Gamma_{c,q}(z) := \begin{pmatrix} c-z & & &  \\ & c q^{-1} - z q & q^{-1} - q & \\ & (1-c^2)z & 1 - zc & \\ & & & q^{-1} - z c q \end{pmatrix}. $$
For ${R}^\Delta_{c,q}(z)$, we make a change of variables to move $z$ from weight in position $(3,2)$ to position $(2,3)$ and a factor of $(c+1)$ from position $(2,3)$ to $(3,2)$, and then a very simple Drinfeld twist to introduce a $-1$ to the weights in positions $(2,2)$ and $(3,3)$, resulting in:
$$ {R}^\Delta_{c,q}(z) := \begin{pmatrix} c-z & & &  \\ & - c q^{-1} + z q & (q^{-1}-q)(c-1)z & \\ & -(c+1) & zc-1 & \\ & & & q^{-1} - z c q \end{pmatrix}. $$
Of course, ${R}^\Gamma_{c,q}(z)$ and ${R}^\Delta_{c,q}(z)$ continue to satisfy a Yang-Baxter equation exactly as in \eqref{paramybeinc}.
Moreover, because the entries of the matrices are polynomial in $c$, we obtain additional solutions for certain limits with respect to $c$. In particular, these identities are preserved under the two limits 
$$ A_q(z) := \lim_{c \rightarrow 0} {R}^\Gamma_{c,q}(z), \quad B_q(z) := \lim_{c \rightarrow \infty} \frac{1}{c} {R}^\Delta_{c,q}(z). $$
The next result justifies our naming of the above $R$-matrices.

\begin{proposition} The limiting matrix $(zq) A_q((zq)^{-1})$ equals $T_\Gamma(v,z)$ as given the first row of Table~\ref{tab:boltzmann} with $q = \sqrt{v}$. The limiting matrix $B_q(-qz)$ equals $T_\Delta(v,z)$ as given in the second row of Table~\ref{tab:boltzmann}, again with $q=\sqrt{v}$.
\end{proposition}

\begin{proof}
One readily checks that
$$ zq \cdot \lim_{c \rightarrow 0} {R}^\Gamma_{c,q}((zq)^{-1}) = \begin{pmatrix} -1 & & &  \\ &  - q & (1-q^{2})z & \\ & 1 & qz & \\ & & & z \end{pmatrix}. $$
We change the sign of the entry in position $(1,1)$ to reflect the ungraded solution to the Yang-Baxter equation, and the result matches the weights in Table~\ref{tab:boltzmann} with $q = \sqrt{v}$. Here, to perform the matching with the table, note that Boltzmann weights from Table~~\ref{tab:boltzmann} are arranged in the matrix as follows:
$$ \begin{pmatrix} a_1 & & & \\ & b_1 & c_1 & \\ & c_2 & b_2 & \\ & & & a_2 \end{pmatrix}. $$
Similarly, we match row two of Table~~\ref{tab:boltzmann} with
$$ \lim_{c \rightarrow \infty} \frac{1}{c} {R}^\Delta_{c,q}(-qz) = \begin{pmatrix} 1 & & &  \\ & - q^{-1} & -(1 - q^2)z & \\ & -1 & -qz & \\ & & & q^2 z \end{pmatrix}, $$
upon negating all entries {\it except for} the one in position $(1,1)$ to obtain an ungraded solution, again with $q = \sqrt{v}$.
\end{proof}

The $R$-matrices that result from the limits in the above proposition should arise from superalgebra analogues of the asymptotic 
modules of Hernandez and Jimbo \cite{JimboHernandez}, where in the setting of Kac-Moody Lie algebras $\mathfrak{g}$ these 
modules may be naturally viewed as $U_q(\mathfrak{b})$-modules for a choice of Borel subalgebra $\mathfrak{b}$. This allows one 
to construct limits of certain families of Kirillov-Reshetikhin modules which remain in category $\mathcal{O}$ of $U_q(\mathfrak{b})$-modules.
The superalgebra version of these asymptotic modules is described in~\cite{ZhangSuperAsymptotic} where the role of the Borel subalgebra is
played by the $q$-Yangian $Y_q(\hat{\mathfrak{g}})$ for an affine superalgebra $\hat{\mathfrak{g}}$.
To emphasize our point of view above, we are not taking the limit of modules at the algebra level, but rather concluding the existence of 
Yang-Baxter equations for the limiting $R$-matrices by continuity.

Modulo this detail, we have thus provided an algebraic explanation via affine quantum group modules for the $R$- and 
$T$-matrices of $RTT$ equations appearing in Table~\ref{tab:boltzmann} and the first two lines of Table~\ref{tab:R-vertices}. 
The latter two lines in Table~\ref{tab:R-vertices} have yet to be explained, as we do not have a consistent choice of Hopf algebra basis 
in order to produce $R$-matrices for the $\Gamma$ and $\Delta$ types simultaneously. Indeed, the above proposition requires different 
bases and different Drinfeld twists of the algebra $U_q$ in order to match $T_\Gamma(v,z)$ and $T_\Delta(v,z)$, respectively, so we have not yet 
provided a possible algebraic explanation for these ``mixed'' $R$-matrices $R_{\Gamma \Delta}$ and $R_{\Delta \Gamma}$.
It would be interesting to explore whether they arise naturally from quantum double constructions in the superalgebra setting, where
 a pair of dual Yangians produce Lie superalgebras, leading to a so-called Yang-Baxter system as in 
 \cite{HlavatyYBS, HlavatyNonultralocal, VladimirovDoubles} and discussed in the context of the models of the 
 present paper in Section~9 of the arXiv version of \cite{BrubakerBumpFriedbergHkice}.

\bibliographystyle{habbrv}
\bibliography{tokuyama}

\begin{thebibliography}{10}

\bibitem{ABPW}
A.~Aggarwal, A.~Borodin, L.~Petrov, and M.~Wheeler.
\newblock Free fermion six vertex model: symmetric functions and random domino
  tilings.
\newblock {\em Selecta Math. (N.S.)}, 29(3):Paper No. 36, 138, 2023.

\bibitem{Baxter}
R.~J. Baxter.
\newblock {\em Exactly solved models in statistical mechanics}.
\newblock Academic Press, Inc. [Harcourt Brace Jovanovich, Publishers], London,
  1982.

\bibitem{BazhanovShadrikov}
V.~V. Bazhanov and A.~G. Shadrikov.
\newblock Trigonometric solutions of the triangle equations, and simple {L}ie
  superalgebras.
\newblock {\em Teoret. Mat. Fiz.}, 73(3):402--419, 1987.

\bibitem{BoukraaMaillard}
S.~Boukraa and J.-M. Maillard.
\newblock Let's {B}axterise.
\newblock In {\em Proceedings of the {B}axter {R}evolution in {M}athematical
  {P}hysics ({C}anberra, 2000)}, volume 102, pages 641--700, 2001.

\bibitem{BrackenGouldZhangDelius}
A.~J. Bracken, M.~D. Gould, Y.~Z. Zhang, and G.~W. Delius.
\newblock Solutions of the quantum {Y}ang-{B}axter equation with extra
  non-additive parameters.
\newblock {\em J. Phys. A}, 27(19):6551--6561, 1994.

\bibitem{BrakOwczarek}
R.~Brak and A.~Owczarek.
\newblock A combinatorial interpretation of the free-fermion condition of the
  six-vertex model.
\newblock {\em J. Phys. A}, 32(19):3497--3503, 1999.

\bibitem{BBB}
B.~Brubaker, V.~Buciumas, and D.~Bump.
\newblock A {Y}ang-{B}axter equation for metaplectic ice. {A}ppendix
  (\cite{StatementB}) joint with {N}athan {G}ray.
\newblock {\em Commun. Number Theory Phys.}, 13(1):101--148, 2019.

\bibitem{StatementB}
B.~Brubaker, V.~Buciumas, D.~Bump, and N.~Gray.
\newblock Duality for metaplectic ice (appendix to \cite{BBB}), 2017.
\newblock arXiv:1709.06500.

\bibitem{BBBGMetahori}
B.~Brubaker, V.~Buciumas, D.~Bump, and H.~P.~A. Gustafsson.
\newblock Metaplectic {I}wahori {W}hittaker functions and supersymmetric
  lattice models, 2020, arXiv:2012.15778.

\bibitem{BBBGVertex}
B.~Brubaker, V.~Buciumas, D.~Bump, and H.~P.~A. Gustafsson.
\newblock Vertex operators, solvable lattice models and metaplectic {W}hittaker
  functions.
\newblock {\em Comm. Math. Phys.}, 380(2):535--579, 2020.

\bibitem{BBBGdemice}
B.~Brubaker, V.~Buciumas, D.~Bump, and H.~P.~A. Gustafsson.
\newblock Colored five-vertex models and {D}emazure atoms.
\newblock {\em J. Combin. Theory Ser. A}, 178:105354, 48, 2021.

\bibitem{BrubakerBumpFriedbergHkice}
B.~Brubaker, D.~Bump, and S.~Friedberg.
\newblock Schur polynomials and the {Y}ang-{B}axter equation.
\newblock {\em Comm. Math. Phys.}, 308(2):281--301, 2011.

\bibitem{wmd5book}
B.~Brubaker, D.~Bump, and S.~Friedberg.
\newblock {\em Weyl group multiple {D}irichlet series: type {A} combinatorial
  theory}, volume 175 of {\em Annals of Mathematics Studies}.
\newblock Princeton University Press, Princeton, NJ, 2011.

\bibitem{BrubakerSchultzHamiltonians}
B.~Brubaker and A.~Schultz.
\newblock On {H}amiltonians for six-vertex models.
\newblock {\em J. Combin. Theory Ser. A}, 155:100--121, 2018.

\bibitem{BumpMcNamaraNakasuji}
D.~Bump, P.~J. McNamara, and M.~Nakasuji.
\newblock Factorial {S}chur functions and the {Y}ang-{B}axter equation.
\newblock {\em Comment. Math. Univ. St. Pauli}, 63(1-2):23--45, 2014.

\bibitem{CaiWangWuZhaoDrinfeld}
J.-f. Cai, S.-k. Wang, K.~Wu, and W.-z. Zhao.
\newblock Drinfel\cprime d{} realization of quantum affine superalgebra
  {$U_q(\widehat{{\rm gl}(1|1)})$}.
\newblock {\em J. Phys. A}, 31(8):1989--1994, 1998.

\bibitem{ChariHernandez}
V.~Chari and D.~Hernandez.
\newblock Beyond {K}irillov-{R}eshetikhin modules.
\newblock In {\em Quantum affine algebras, extended affine {L}ie algebras, and
  their applications}, volume 506 of {\em Contemp. Math.}, pages 49--81. Amer.
  Math. Soc., Providence, RI, 2010.

\bibitem{ChengWangDualities}
S.-J. Cheng and W.~Wang.
\newblock Dualities for {L}ie superalgebras.
\newblock In {\em Lie theory and representation theory}, volume~2 of {\em Surv.
  Mod. Math.}, pages 1--46. Int. Press, Somerville, MA, 2012.

\bibitem{DingFrenkel}
J.~T. Ding and I.~B. Frenkel.
\newblock Isomorphism of two realizations of quantum affine algebra
  {$U_q({\germ g}{\germ l}(n))$}.
\newblock {\em Comm. Math. Phys.}, 156(2):277--300, 1993.

\bibitem{DrinfeldYangians}
V.~G. Drinfeld.
\newblock A new realization of {Y}angians and of quantum affine algebras.
\newblock {\em Dokl. Akad. Nauk SSSR}, 296(1):13--17, 1987.

\bibitem{Drinfeld}
V.~G. Drinfeld.
\newblock Quantum groups.
\newblock In {\em Proceedings of the {I}nternational {C}ongress of
  {M}athematicians, {V}ol. 1, 2 ({B}erkeley, {C}alif., 1986)}, pages 798--820.
  Amer. Math. Soc., Providence, RI, 1987.

\bibitem{FRT2}
L.~D. Faddeev, N.~Y. Reshetikhin, and L.~A. Takhtajan.
\newblock Quantization of {L}ie groups and {L}ie algebras.
\newblock In {\em Algebraic analysis, {V}ol.\ {I}}, pages 129--139. Academic
  Press, Boston, MA, 1988.

\bibitem{FanWuPhase}
C.~Fan and F.~Y. Wu.
\newblock General lattice model of phase transitions.
\newblock {\em Physical Review B}, 2(3):723--733, 1970.

\bibitem{Felderhof1}
B.~Felderhof.
\newblock Direct diagonalization of the transfer matrix of the zero-field
  free-fermion model.
\newblock {\em Physica}, 65(3):421--451, 1973.

\bibitem{Fisher}
M.~E. Fisher.
\newblock Walks, walls, wetting, and melting.
\newblock {\em J. Statist. Phys.}, 34(5-6):667--729, 1984.

\bibitem{ForresterDW}
P.~J. Forrester.
\newblock Exact results for vicious walker models of domain walls.
\newblock {\em J. Phys. A}, 24(1):203--218, 1991.

\bibitem{HamelKingBijective}
A.~M. Hamel and R.~C. King.
\newblock Bijective proofs of shifted tableau and alternating sign matrix
  identities.
\newblock {\em J. Algebraic Combin.}, 25(4):417--458, 2007.

\bibitem{JimboHernandez}
D.~Hernandez and M.~Jimbo.
\newblock Asymptotic representations and {D}rinfeld rational fractions.
\newblock {\em Compos. Math.}, 148(5):1593--1623, 2012.

\bibitem{Hietarinta}
J.~Hietarinta.
\newblock All solutions to the constant quantum {Y}ang-{B}axter equation in two
  dimensions.
\newblock {\em Phys. Lett. A}, 165(3):245--251, 1992.

\bibitem{HlavatyNonultralocal}
L.~Hlavat{\'y}.
\newblock Algebraic framework for quantization of nonultralocal models.
\newblock {\em J. Math. Phys.}, 36(9):4882--4897, 1995.

\bibitem{HlavatyYBS}
L.~Hlavat{\'y}.
\newblock Yang-{B}axter systems, solutions and applications.
\newblock {\em arXiv:q-alg/9711027}, 1997 and 2008.

\bibitem{Ivanov}
D.~Ivanov.
\newblock Symplectic ice.
\newblock In {\em Multiple {D}irichlet series, {L}-functions and automorphic
  forms}, volume 300 of {\em Progr. Math.}, pages 205--222.
  Birkh\"auser/Springer, New York, 2012.

\bibitem{JimboQuantumGroups}
M.~Jimbo.
\newblock A {$q$}-difference analogue of {$U(\mathfrak{g})$} and the
  {Y}ang-{B}axter equation.
\newblock {\em Lett. Math. Phys.}, 10(1):63--69, 1985.

\bibitem{Jones}
V.~F.~R. Jones.
\newblock Baxterization.
\newblock In {\em Proceedings of the {C}onference on {Y}ang-{B}axter
  {E}quations, {C}onformal {I}nvariance and {I}ntegrability in {S}tatistical
  {M}echanics and {F}ield {T}heory}, volume~4, pages 701--713, 1990.

\bibitem{KacSuperalgebras}
V.~G. Kac.
\newblock Lie superalgebras.
\newblock {\em Advances in Math.}, 26(1):8--96, 1977.

\bibitem{KBI93}
V.~E. Korepin, N.~M. Bogoliubov, and A.~G. Izergin.
\newblock {\em Quantum inverse scattering method and correlation functions}.
\newblock Cambridge Monographs on Mathematical Physics. Cambridge University
  Press, Cambridge, 1993.

\bibitem{KorffQC}
C.~Korff.
\newblock Quantum cohomology via vicious and osculating walkers.
\newblock {\em Lett. Math. Phys.}, 104(7):771--810, 2014.

\bibitem{KwonKac}
J.-H. Kwon.
\newblock Crystal bases of {$q$}-deformed {K}ac modules over the quantum
  superalgebras {$U_q(\mathfrak{gl}(m|n))$}.
\newblock {\em Int. Math. Res. Not. IMRN}, (2):512--550, 2014.

\bibitem{LiguoriMintchev}
A.~Liguori and M.~Mintchev.
\newblock Spectral parameters of the quantum {Y}ang-{B}axter equation.
\newblock {\em Phys. Lett. B}, 275(3-4):371--374, 1992.

\bibitem{LinYamaneZhang}
H.~Lin, H.~Yamane, and H.~Zhang.
\newblock On generators and defining relations of quantum affine superalgebra
  {$U_q (\widehat{\germ sl}_{m|n})$}.
\newblock {\em J. Algebra Appl.}, 23(1):Paper No. 2450021, 29, 2024.

\bibitem{MaityPadmanabhanKorepin}
S.~Maity, P.~Padmanabhan, and V.~Korepin.
\newblock Non-hermitian integrable systems from constant non-invertible
  solutions of the {Y}ang-{B}axter equation, 2025, arXiv:2503.08109.

\bibitem{MussonSuperalgebras}
I.~M. Musson.
\newblock {\em Lie superalgebras and enveloping algebras}, volume 131 of {\em
  Graduate Studies in Mathematics}.
\newblock American Mathematical Society, Providence, RI, 2012.

\bibitem{Perk-Au-Yang}
J.~H.~H. Perk and H.~Au-Yang.
\newblock Yang-baxter equations.
\newblock 2006, arXiv:math-ph/0606053.

\bibitem{ReshetikhinLectures}
N.~Reshetikhin.
\newblock Lectures on the integrability of the six-vertex model.
\newblock In {\em Exact methods in low-dimensional statistical physics and
  quantum computing}, pages 197--266. Oxford Univ. Press, Oxford, 2010.

\bibitem{ReshetikhinSTS}
N.~Y. Reshetikhin and M.~A. Semenov-Tian-Shansky.
\newblock Quantum {$R$}-matrices and factorization problems.
\newblock {\em J. Geom. Phys.}, 5(4):533--550, 1988.

\bibitem{FRT1}
N.~Y. Reshetikhin, L.~A. Takhtadzhyan, and L.~D. Faddeev.
\newblock Quantization of {L}ie groups and {L}ie algebras.
\newblock {\em Algebra i Analiz}, 1(1):178--206, 1989.

\bibitem{Tokuyama}
T.~Tokuyama.
\newblock A generating function of strict {G}el\cprime fand patterns and some
  formulas on characters of general linear groups.
\newblock {\em J. Math. Soc. Japan}, 40(4):671--685, 1988.

\bibitem{VladimirovDoubles}
A.~A. Vladimirov.
\newblock A method for obtaining quantum doubles from the {Y}ang-{B}axter
  {$R$}-matrices.
\newblock {\em Modern Phys. Lett. A}, 8(14):1315--1321, 1993.

\bibitem{HuafengZhangGL11}
H.~Zhang.
\newblock Universal {$R$}-matrix of quantum affine {$\germ{gl}(1,1)$}.
\newblock {\em Lett. Math. Phys.}, 105(11):1587--1603, 2015.

\bibitem{HuafengZhangRTT}
H.~Zhang.
\newblock R{TT} realization of quantum affine superalgebras and tensor
  products.
\newblock {\em Int. Math. Res. Not. IMRN}, (4):1126--1157, 2016.

\bibitem{ZhangSuperAsymptotic}
H.~Zhang.
\newblock Asymptotic representations of quantum affine superalgebras.
\newblock {\em SIGMA Symmetry Integrability Geom. Methods Appl.}, 13:Paper No.
  066, 25, 2017.

\bibitem{ZJHab}
P.~Zinn-Justin.
\newblock Integrability and combinatorics: selected topics.
\newblock In {\em Exact methods in low-dimensional statistical physics and
  quantum computing}, pages 483--526. Oxford Univ. Press, Oxford, 2010.

\end{thebibliography}

\end{document}